\newcommand{\TheTitle}{Dynamic evaluation of exponential polynomial curves and surfaces via basis transformation}
\newcommand{\TheAuthors}{Xunnian Yang and Jialin Hong}
\title{{\TheTitle}\thanks{This work was supported by National Natural Science Foundation of China grants 11290142. Jialin Hong was also supported by NSFC grants 91530118, 91630312, 91130003.}}
\author{
  Xunnian Yang\thanks{Corresponding author. School of Mathematical Sciences, Zhejiang University, Hangzhou 310027, China
    (\email{yxn@zju.edu.cn}).}
  \and
  Jialin Hong\thanks{LSEC, Academy of Mathematics and Systems Science,
    Chinese Academy of Sciences, Beijing 100190, China (\email{hjl@lsec.cc.ac.cn}).}
}
\DeclareMathOperator{\diag}{diag}
\begin{document}

\maketitle

\begin{abstract}
It is shown in "SIAM J. Sci. Comput. 39 (2017):B424-B441" that free-form curves used in computer aided geometric design can usually be represented as the solutions of linear differential systems and points and derivatives on the curves can be evaluated dynamically by solving the differential systems numerically. In this paper we present an even more robust and efficient algorithm for dynamic evaluation of exponential polynomial curves and surfaces. Based on properties that spaces spanned by general exponential polynomials are translation invariant and polynomial spaces are invariant with respect to a linear transformation of the parameter, the transformation matrices between bases with or without translated or linearly transformed parameters are explicitly computed. Points on curves or surfaces with equal or changing parameter steps can then be evaluated dynamically from a start point using a pre-computed matrix. Like former dynamic evaluation algorithms, the newly proposed approach needs only arithmetic operations for evaluating exponential polynomial curves and surfaces. Unlike conventional numerical methods that solve a linear differential system, the new method can give robust and accurate evaluation results for any chosen parameter steps. Basis transformation technique also enables dynamic evaluation of polynomial curves with changing parameter steps using a constant matrix, which reduces time costs significantly than computing each point individually by classical algorithms.

\end{abstract}

\begin{keywords}
  curves and surfaces, linear differential operator, exponential polynomial, dynamic evaluation, basis transformation
\end{keywords}

\begin{AMS}
65D17, 65D18, 65D25, 65L05
\end{AMS}


\section{Introduction}
\label{Sec:intro}
The exponential polynomials that lie in the null spaces of constant coefficient linear differential operators have nice properties and they have often been used for the construction of curves and surfaces in the fields of CAGD (computer aided geometric design)~\cite{Aldaz2009:BernsteinOperatorsForExpPolynomials,Farin2001CAGDBook,GonsorNeamtu1994:PolarForms}. The frequently used exponential polynomials are polynomials, trigonometric functions, hyperbolic functions or their mixtures. Besides polynomial curves and surfaces, typical curves and surfaces such as ellipses, cycloids, involutes, helices, etc. can be represented by exponential polynomials exactly~\cite{MainarPS01:ShapePreservingAlternatives,Pottmann1994:HelixSplines,Roth2015:DescrptionCurvesInExtChebSpace,Zhang96:C-curve}. By choosing a proper parameter interval, normalized B-bases that are useful for optimal shape design can be obtained from the exponential polynomials~\cite{ChenWang03:Bezier-likeCurves,Mazure2005:ChebyshevSpacesBernsteinBases,RothJ10:ControlPointbasedDescription, Sanchez-Reyes09:PeriodicBezierCurves}. Algebraic trigonometric polynomials can be used to define curves intrinsically or design curves with Pythagorean hodographs~\cite{WuYang2016:IntrinsicHermite,LuciaRomani2014:ATPHcurves,LuciaRomani2018:ATPHspacecurves}.

Many algorithms have been given in the literature to evaluate polynomial curves and surfaces. The de Casteljau algorithm or the rational de Casteljau algorithm can be employed to robustly evaluate single points on B\'{e}zier or rational B\'{e}zier curves~\cite{Farin2001CAGDBook,Pena07:EvaluatingRationalBezierSurface}. The Horner algorithm, the VS algorithm, et al. can be used to evaluate polynomial or B\'{e}zier curves with even lower complexity~\cite{Pena09:RunningRelativeErrorForEvaluation,Bezerra13:FFTBezierEvaluation}. If for rendering or machining purposes, sequences of points have to be evaluated in an efficient way~\cite{ElberCohen96:IsocurveBasedRendering,Gatilov16:NURBSevaluation}. Particularly, the forward differencing approaches have been successfully employed for fast rendering of B\'{e}zier or NURBS (Non-Uniform Rational B-spline) curves and surfaces~\cite{LienSP1987:AdaptiveForwardDifference,LukenCheng96:SurfaceDerivativeForRendering}.

Differently from polynomial curves and surfaces that can be evaluated by arithmetic operations, points on curves and surfaces which are constructed by transcendental functions or mixtures of polynomials and transcendental functions have to be evaluated by inquiring pre-computed special function tables or loading special mathematical libraries. Though this seems feasible for many modern computing machines~\cite{Koren1990:EvaluationElementaryFunctions,Nave1983:ImplementationTranscendentalFunctions}, evaluating general exponential polynomial curves and surfaces by only arithmetic operations without any pre-computed tables or special math library can have its own advantages. Particularly, the speed and efficiency of evaluation play important roles in the fields of CNC machining and interactive rendering.

Recently, we have shown that free-form curves defined in various spaces in CAGD are the solutions of linear differential systems and points and derivatives on the curves can be obtained by solving the linear differential systems numerically~\cite{Yang&Hong2017:DynamicEvaluation}. Particularly, when the parameter step is fixed, points on a free-form curve can be evaluated dynamically by multiplying a pre-computed constant matrix with prior points. Iso-parameter curves on a surface can also be dynamically evaluated by establishing a linear differential system for each iso-parameter curve.
The method is simple and universal and points on polynomial as well as transcendental curves and surfaces can be evaluated with only arithmetic operations. However, the evaluation accuracy varies much when the differential systems have been solved by different numerical methods or the parameter step has been chosen different values. If the constant matrix for dynamic evaluation is given by the exponential of the coefficient matrix of a linear differential system, careful attention should be paid for robust computation of the matrix~\cite{MolerLoan03:19WaysToComputeExpOfMatrix}.

Instead of solving linear differential systems numerically, in this paper we derive the constant matrices for dynamic evaluation of curves and surfaces using basis transformation. This is based on the fact that spaces spanned by the exponential polynomial basis are invariant with respect to the translation of the parameter while curves and surfaces used for CAGD are usually constructed by exponential polynomials. By using identities of exponential polynomials, we compute explicitly the transformation matrices between bases with or without the translation of the parameter. Combined with control points, a constant matrix for evaluating points on an exponential polynomial curve with equal parameter steps is derived. It is also noticed that a polynomial space of degree no more than a given number is even invariant with respect to any linear transformation of the parameter. The matrix for polynomial basis transformation can then be used to evaluate points on polynomial curves with changing parameter steps. Based on basis transformation, a family of surface curves with various iso-parameters can be evaluated dynamically using a single matrix and surface curves with skew parametrization can also be evaluated dynamically with a pre-computed constant matrix.

The rest of the paper is structured as follows. In Section~\ref{Sec:Basis transformation for spaces composed of exponential polynomials} we present explicit formulae for computing basis transformation for exponential polynomials. In Section~\ref{Section:Dynamic evaluation of exponential polynomial curves and surfaces} robust algorithm for evaluating points on general exponential polynomial curves with a fixed parameter step, dynamic algorithm for evaluating polynomial curves with changing parameter steps, and dynamic algorithms for evaluating iso-parameter curves on surfaces or surface curves with skew parametrization will be given. Examples and comparisons with some known methods for curve and surface evaluation are given in Section~\ref{Sec:examples}. Section~\ref{Sec:Conclu} concludes the paper with a brief summary of our work.


\section{Basis transformation for spaces composed of exponential polynomials}
\label{Sec:Basis transformation for spaces composed of exponential polynomials}

As parametric curves and surfaces are usually defined by basis functions together with coefficients or control points, a parametric curve or surface can then be evaluated efficiently by exploring distinguished properties of the basis. This section presents explicit transformation formulae for exponential polynomial basis which will be used for robust and efficient curve or surface evaluation in next section.

\subsection{Spaces spanned by exponential polynomials}
\label{Subsection:Spaces_Spanned_by_exponential_polynomials}

Suppose that a linear differential operator $L$ with constant coefficients is given by
\[
L=\left(\frac{d}{dt}-\lambda_0\right)\cdots \left(\frac{d}{dt}-\lambda_n\right),
\]
where $\lambda_i\in \mathbb{C}$, $i=0,1,\ldots,n$, and $\Lambda=\{\lambda_0,\lambda_1,\ldots,\lambda_n\}$ is closed under conjugation. A function $f(t)$ that satisfies $Lf(t)=0$ is referred an exponential polynomial. Let $\Omega$ be the null space of the linear differential operator. From the knowledge of differential equation~\cite{Arnold1992:ODE} we know that $\Omega=\text{span}\{\phi_0(t), \phi_1(t), \ldots, \phi_n(t)\}$, where $\phi_i(t)$, $i=0,1,\ldots,n$, are the basis functions of the space.
Based on the definition of exponential polynomials we have the following proposition.
\begin{proposition}\label{Proposition:null space of LDS}
Suppose that $L$ is a constant coefficient linear differential operator and $\Omega$ is the null space of the operator. Let $h$ be an arbitrary given real number. If function $f(t)$ satisfies $Lf(t)=0$, it yields that $f'(t)\in\Omega$ and $f(t+h)\in\Omega$.
\end{proposition}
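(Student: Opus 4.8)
The plan is to prove the two claims separately, both by exploiting that $L$ is a constant coefficient linear differential operator and therefore commutes with both differentiation and translation of the argument.

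First I would handle the claim $f'(t) \in \Omega$. The key observation is that $L$ is built from the factors $\frac{d}{dt} - \lambda_i$, each of which commutes with the differentiation operator $\frac{d}{dt}$, since $\frac{d}{dt}$ commutes with itself and with multiplication by the constants $\lambda_i$. Consequently $L$ itself commutes with $\frac{d}{dt}$, i.e. $L\bigl(\frac{d}{dt} g\bigr) = \frac{d}{dt}(Lg)$ for any sufficiently smooth $g$. Applying this identity with $g = f$ and using $Lf = 0$ gives $L(f') = \frac{d}{dt}(Lf) = \frac{d}{dt}(0) = 0$, so $f' \in \Omega$ by the definition of $\Omega$ as the null space of $L$.

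Next I would treat the translation claim $f(t+h) \in \Omega$. Define the shift operator $T_h$ by $(T_h g)(t) = g(t+h)$. Because the coefficients of $L$ are constant (independent of $t$), the operator $L$ is translation invariant: it commutes with $T_h$, so $L(T_h g) = T_h(Lg)$. This can be seen factor by factor, since $\bigl(\frac{d}{dt} - \lambda_i\bigr)$ applied to $g(t+h)$ equals $(g'(t+h) - \lambda_i g(t+h))$, which is exactly $T_h$ applied to $\bigl(\frac{d}{dt} - \lambda_i\bigr)g$. Iterating over all factors yields the commutation $L T_h = T_h L$. Applying this to $f$ and using $Lf = 0$ gives $L\bigl(f(t+h)\bigr) = (T_h Lf)(t) = (T_h 0)(t) = 0$, hence $f(t+h) \in \Omega$.

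The arguments are genuinely short; the only real obstacle is being careful about why the commutation identities hold, namely that constant coefficients are precisely what make the operators $\frac{d}{dt}$ and $T_h$ commute with $L$. The translation invariance is the more conceptually important of the two for the paper, since it underpins the basis transformation technique developed in the sequel, so I would make sure to state the factor-by-factor commutation explicitly rather than treat it as obvious. Both conclusions follow immediately once commutation is established, so no further computation is needed.
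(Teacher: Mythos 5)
Your proof is correct. Note that the paper itself supplies no proof of this proposition at all: it is stated as following directly ``based on the definition of exponential polynomials,'' so there is no argument of the paper's to compare against. Your operator-commutation argument (constant coefficients make $L$ commute with $\frac{d}{dt}$, and factor-by-factor with the shift $T_h$, so $Lf=0$ forces $Lf'=0$ and $L\bigl(f(t+h)\bigr)=0$) is the standard and complete justification, and it is consistent with how the paper later realizes the same translation invariance concretely in Proposition~\ref{Proposition:space with tranlated basis}, where the mechanism is instead the first-order system $\Phi'(t)=A\Phi(t)$ and the matrix exponential identity $\Phi(t+h)=e^{Ah}\Phi(t)$; your route is more elementary in that it needs no choice of basis and no linear-algebra machinery, only the commutation identities.
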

Proposition~\ref{Proposition:null space of LDS} states that the null space of a linear differential operator is closed with respect to a differentiation and the space is also invariant with respect to any translation of the parameter.

Before deriving formulae for basis transformation, we present the definition of union or product of two sets of bases. Assume $\Phi_a(t)=\left(a_0(t),a_1(t),\ldots,a_n(t)\right)^T$ and $\Phi_b(t)=\left(b_0(t),b_1(t),\ldots,b_m(t)\right)^T$, where the capital 'T' means the transpose of a vector or matrix. The (ordered) union of $\Phi_a(t)$ and $\Phi_b(t)$ is given by
\begin{equation}\label{Eqn:basis union}
\Phi_a(t) \sqcup \Phi_b(t)=\left(a_0(t),\ldots,a_n(t),b_0(t),\ldots,b_m(t)\right)^T.
\end{equation}
Let $a(t)\Phi_b(t)=\left(a(t)b_0(t),\ldots,a(t)b_m(t)\right)^T$. The product of $\Phi_a(t)$ and $\Phi_b(t)$ is obtained as
\begin{equation}\label{Eqn:bais product}
\Phi_a(t) \otimes \Phi_b(t) = \sqcup_{i=0}^n a_i(t)\Phi_b(t).
\end{equation}
Just like the precedence of '$\times$' over '$+$', we assume the operation '$\otimes$' has precedence over '$\sqcup$'. Thus, $\Phi_1(t)\sqcup\Phi_2(t)\otimes\Phi_3(t)$ has the same meaning as $\Phi_1(t)\sqcup(\Phi_2(t)\otimes\Phi_3(t))$.

\begin{proposition}\label{Proposition:basis of union or product spaces}
Suppose spaces spanned by basis $\Phi_a(t)$ or $\Phi_b(t)$ are closed with respect to a differentiation. Then, spaces spanned by the union $\Phi_a(t)\sqcup\Phi_b(t)$ or by the product $\Phi_a(t)\otimes \Phi_b(t)$ are also closed with respect to a differentiation.
\end{proposition}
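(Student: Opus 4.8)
The plan is to reduce the claim to a statement about the individual basis functions. A space is closed under differentiation precisely when the derivative of every member lies back in the space, and by linearity of differentiation it suffices to verify this for each basis function. So I would first fix notation, writing $A=\operatorname{span}\{a_0,\ldots,a_n\}$ and $B=\operatorname{span}\{b_0,\ldots,b_m\}$ for the two given spaces, and record the hypothesis in the concrete form $a_i'=\sum_{k=0}^n \alpha_{ik}a_k$ and $b_j'=\sum_{l=0}^m \beta_{jl}b_l$ for suitable scalars $\alpha_{ik},\beta_{jl}$. These relations simply restate that each space is closed under differentiation.

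For the union $\Phi_a(t)\sqcup\Phi_b(t)$, which spans $A+B$, the argument is immediate: each basis function is one of the $a_i$ or $b_j$, and its derivative already lies in $A$ or in $B$, hence in $A+B$. No cross terms appear, so closure follows at once from the recorded hypothesis.

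For the product $\Phi_a(t)\otimes\Phi_b(t)$, whose entries are the $a_i b_j$, I would differentiate a typical basis function by the Leibniz rule, $\frac{d}{dt}(a_i b_j)=a_i' b_j + a_i b_j'$. The heart of the argument is then to substitute the closure relations from the first step, $a_i' b_j=\sum_{k=0}^n \alpha_{ik}(a_k b_j)$ and $a_i b_j'=\sum_{l=0}^m \beta_{jl}(a_i b_l)$, so that each summand is again a product of an $a$-factor with a $b$-factor, i.e.\ a member of the product basis. Adding these two linear combinations exhibits $\frac{d}{dt}(a_i b_j)$ as an element of $\operatorname{span}(\Phi_a(t)\otimes\Phi_b(t))$, which is exactly what closure requires.

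The computation is routine once it is set up this way; the only point that needs care, and the step I would emphasize, is checking that the Leibniz expansion does not produce terms outside the product span. This is guaranteed because differentiating either factor keeps it inside its own space, so every cross term retains the required product structure and no genuinely new functions are introduced. Extending the argument from single basis functions to arbitrary elements of the spanned spaces is then immediate by linearity, completing both cases.
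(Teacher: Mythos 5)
Your proof is correct and follows essentially the same route as the paper's: record the closure hypotheses as linear relations on the basis functions, handle the union trivially, and handle the product via the Leibniz rule. The only difference is notational --- your scalars $\alpha_{ik},\beta_{jl}$ are exactly the entries of the matrices $A,B$ in the paper's proof, and your two expansions for $(a_ib_j)'$ are the componentwise reading of the paper's matrix identity $\frac{d}{dt}(\Phi_a\otimes\Phi_b)=(A\otimes I_{m+1}+I_{n+1}\otimes B)(\Phi_a\otimes\Phi_b)$, which the paper states in Kronecker form because the explicit coefficient matrices are useful for its later computations.
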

\begin{proof}
Because spaces spanned by $\Phi_a(t)$ or $\Phi_b(t)$ are closed with respect to a differentiation, there exist matrix $A$ of order $n+1$ and matrix $B$ of order $m+1$ such that $\Phi_a'(t)=A\Phi_a(t)$ and $\Phi_b'(t)=B\Phi_b(t)$. Then, the derivative of $\Phi_a(t)\sqcup\Phi_b(t)$ can be computed as
\[
\frac{d}{dt}(\Phi_a(t)\sqcup\Phi_b(t))=\diag(A,B)(\Phi_a(t)\sqcup\Phi_b(t)),
\]
where
$
\diag(A,B)=\left(\begin{array}{cc}
                    A & 0 \\
                    0 & B
                    \end{array}
                \right).
$
Let $I_{n+1}$ and $I_{m+1}$ be the identity matrices of order $n+1$ or order $m+1$, respectively. The derivative of $\Phi_a(t)\otimes \Phi_b(t)$ is computed by
\[
\begin{array}{lcl}
\frac{d}{dt}(\Phi_a(t)\otimes\Phi_b(t)) &=& \Phi_a'(t) \otimes \Phi_b(t) + \Phi_a(t) \otimes \Phi_b'(t) \\
                &=& (A \otimes I_{m+1}) (\Phi_a(t) \otimes \Phi_b(t)) + (I_{n+1}\otimes B) (\Phi_a(t) \otimes \Phi_b(t))  \\
                &=& (A \otimes I_{m+1} + I_{n+1}\otimes B) (\Phi_a(t) \otimes \Phi_b(t)).
\end{array}
\]
Therefore, the spaces spanned by $\Phi_a(t)\sqcup\Phi_b(t)$ or by $\Phi_a(t)\otimes \Phi_b(t)$ are also closed under differentiation. This completes the proof.
\end{proof}

In the following we assume that the basis functions are real. Particularly, we assume that the bases are obtained by unions or products of a few elementary basis vectors. Let $U_n(t)=(1,t,t^2,\ldots,t^n)^T$, $V(t)=(\cos(t),\sin(t))^T$ and $W(t)=(\cosh(t),\sinh(t))^T$. The basis functions for free-form curves and surfaces in CAGD can usually be obtained by recursive compositions or tensor products of the elementary bases. Several popular basis vectors for construction of free-form curves and surfaces in CAGD and their elementary decomposition can be found in Table~\ref{Table:Basis vector decomposition}.
\begin{table}[htbp]
\centering
\caption{Basis vectors and their elementary decompositions.}
\begin{tabular}{|c|c|}
\hline
Basis vector & Elementary decomposition \\
\hline
$(1,t,\cos t,\sin t)^T$~\cite{Zhang96:C-curve}           &  $U_1(t)\sqcup V(t)$  \\
\hline
$(1, \cos t,\sin t, \ldots, \cos nt, \sin nt)^T$~\cite{Sanchez-Reyes98:HarmonicRationalBezier}   &$U_0(t)\sqcup V(t)\sqcup\ldots\sqcup V(nt)$  \\
\hline
$(1, \cosh t,\sinh t, \ldots, \cosh nt, \sinh nt)^T$~\cite{ShenW2005:HyperbloicCurves}   &  $U_0(t)\sqcup W(t)\sqcup\ldots\sqcup W(nt)$  \\
\hline
$(1,t,\cos t,\sin t, t\cos t, t\sin t)^T$~\cite{MainarPS01:ShapePreservingAlternatives}  &  $U_1(t)\sqcup U_1(t)\otimes V(t)$  \\

\hline
$(1,t,\ldots,t^{n-2},\cos t,\sin t)^T$~\cite{ChenWang03:Bezier-likeCurves}  &  $U_{n-2}(t)\sqcup V(t)$ \\

\hline
$(1,t,\ldots,t^{n-2},\cosh t,\sinh t)^T$~\cite{LiYajuan2005:twokindsofB-basis}  &  $U_{n-2}(t)\sqcup W(t)$  \\

\hline
$(1,\cosh t, \sinh t,\cos t,\sin t)^T$~\cite{BrilleaudM12:MixedHyperbolicTrigonometric}  &  $U_0(t)\sqcup W(t)\sqcup V(t)$ \\

\hline
$(1,t,\ldots,t^{n-5},\cosh t,\sinh t, \cos t,\sin t)^T$~\cite{XuGangW07:AHT-Bezier}  &  $U_{n-5}(t)\sqcup W(t)\sqcup V(t)$  \\

\hline
$(1,\cos t,\sin t, t\cos t, t\sin t,\ldots, t^n\cos t, t^n\sin t)^T$~\cite{WuYang2016:IntrinsicHermite}  &  $U_0(t)\sqcup U_n(t)\otimes V(t)$  \\

\hline
\end{tabular}\label{Table:Basis vector decomposition}
\end{table}

\subsection{Transformation of general exponential polynomial basis with parameter translation}
\label{Subsection:basis transformation with parameter translation}

In this subsection we show that the space spanned by general exponential polynomial basis is invariant with respect to a translation of the parameter and a simple and robust method for computing the transformation matrix between different bases will be presented.
\begin{proposition}\label{Proposition:space with tranlated basis}
Suppose $\Omega=\text{span}\{\phi_0(t),\phi_1(t),\ldots,\phi_n(t)\}$ is closed with respect to a differentiation. Let $h$ be an arbitrary real number. It yields that $\Omega=\text{span}\{\phi_0(t+h),\phi_1(t+h),\ldots,\phi_n(t+h)\}$. Meanwhile, there exists a matrix $C_h$ such that $\Phi(t+h)=C_h \Phi(t)$.
\end{proposition}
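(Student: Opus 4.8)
The plan is to exploit the hypothesis that $\Omega$ is closed under differentiation in its matrix form, exactly as in the proof of Proposition~\ref{Proposition:basis of union or product spaces}. Writing $\Phi(t)=(\phi_0(t),\ldots,\phi_n(t))^T$, closure under differentiation supplies a constant matrix $A$ of order $n+1$ with $\Phi'(t)=A\Phi(t)$. I would then read this relation as a linear constant-coefficient ODE system satisfied by the vector-valued function $\Phi$. By the uniqueness theorem for such systems, $\Phi$ is determined by its value at a single point, so that $\Phi(t)=e^{tA}\Phi(0)$ for every $t$.

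Next I would translate and invoke the semigroup property of the matrix exponential. Since $tA$ and $hA$ commute,
\[
\Phi(t+h)=e^{(t+h)A}\Phi(0)=e^{hA}\,e^{tA}\Phi(0)=e^{hA}\Phi(t),
\]
so the matrix $C_h=e^{hA}$ is independent of $t$ and satisfies $\Phi(t+h)=C_h\Phi(t)$, which is the asserted relation. In particular each $\phi_i(t+h)=\sum_j (C_h)_{ij}\phi_j(t)$ lies in $\Omega$, giving $\text{span}\{\phi_0(t+h),\ldots,\phi_n(t+h)\}\subseteq\Omega$. For the reverse inclusion I would use that $C_h$ is invertible for every $h$, with $C_h^{-1}=e^{-hA}=C_{-h}$; then $\Phi(t)=C_h^{-1}\Phi(t+h)$ expresses each $\phi_i(t)$ through the translated family, so $\Omega\subseteq\text{span}\{\phi_0(t+h),\ldots,\phi_n(t+h)\}$ and the two spans coincide.

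The step that needs the most care is the passage from the local, differential relation $\Phi'=A\Phi$ to the global, finite-translation identity $\Phi(t)=e^{tA}\Phi(0)$. I would justify it either by the ODE uniqueness theorem cited in~\cite{Arnold1992:ODE}, or, equivalently and more elementarily, by Taylor expansion: since the components of $\Phi$ are exponential polynomials and hence entire, the induction $\Phi^{(k)}=A^k\Phi$ yields $\Phi(t+h)=\sum_{k\ge 0}\frac{h^k}{k!}\Phi^{(k)}(t)=\sum_{k\ge 0}\frac{h^k}{k!}A^k\Phi(t)=e^{hA}\Phi(t)$, with convergence guaranteed by analyticity. This argument also recovers and slightly extends Proposition~\ref{Proposition:null space of LDS}, as it establishes translation invariance for any differentiation-closed finite-dimensional space, not only for a null space $\ker L$. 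Finally I note that $C_h=e^{hA}$ serves here only as an existence certificate; computing it robustly \emph{without} forming a matrix exponential is precisely the purpose of the explicit identities developed in the remainder of this section.
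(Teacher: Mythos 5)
Your proposal is correct and follows essentially the same route as the paper's own proof: closure under differentiation yields $\Phi'(t)=A\Phi(t)$, the matrix-exponential solution gives $\Phi(t+h)=e^{Ah}\Phi(t)$, so $C_h=e^{Ah}$, and invertibility of $C_h$ yields equality of the two spans. Your added Taylor-expansion justification and the explicit two-inclusion argument for the span equality are just slightly more detailed versions of what the paper does implicitly.
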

\begin{proof}
Because $\phi_i'(t)\in\Omega$, $i=0,1,\ldots,n$, there exists a matrix $A$ such that $\Phi(t)$ satisfies a linear differential system
\begin{equation}\label{Eqn:linear system for basis}
\left\{
\begin{array}{lcl}
\Phi'(t) & = & A\Phi(t), \ \ \ t\in \mathbb{R}. \\
\Phi(t_0)& = & \Phi_0,
\end{array}
\right.
\end{equation}
From Equation~(\ref{Eqn:linear system for basis}), $\Phi(t)$ can be represented as $\Phi(t)=e^{A(t-t_0)}\Phi_0$. Therefore, we have $\Phi(t+h)=e^{Ah}\Phi(t)$. Let $C_h=e^{Ah}$. Since $\text{det}(C_h)\neq 0$ and $\Phi(t)=C_h^{-1}\Phi(t+h)$, we have $\Omega=\text{span}\{\phi_0(t+h),\phi_1(t+h),\ldots,\phi_n(t+h)\}$. This proves the proposition.
\end{proof}

\begin{proposition}\label{Proposition: invariant surface basis space}
Suppose $\Omega=\textrm{span}\{\phi_0(u,v),\phi_1(u,v),\ldots,\phi_L(u,v)\}$ and $\frac{\partial\phi_l(u,v)}{\partial u}\in\Omega$, $\frac{\partial\phi_l(u,v)}{\partial v}\in\Omega$, $l=0,1,\ldots,L$. The space $\Omega$ is invariant when parameter $u$ or parameter $v$ or both of the two parameters have been translated.
\end{proposition}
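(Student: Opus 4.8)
The plan is to reduce the two-parameter statement to two successive applications of the one-variable argument already used in the proof of Proposition~\ref{Proposition:space with tranlated basis}. First I would collect the basis functions into the vector $\Phi(u,v)=(\phi_0(u,v),\ldots,\phi_L(u,v))^T$. Since $\frac{\partial\phi_l}{\partial u}\in\Omega$ for every $l$, and membership in $\Omega$ means being a constant-coefficient combination of $\phi_0,\ldots,\phi_L$, there exists a constant matrix $A$ with $\frac{\partial}{\partial u}\Phi(u,v)=A\Phi(u,v)$; likewise the hypothesis $\frac{\partial\phi_l}{\partial v}\in\Omega$ yields a constant matrix $B$ with $\frac{\partial}{\partial v}\Phi(u,v)=B\Phi(u,v)$.

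Next I would freeze $v$ and regard $u\mapsto\Phi(u,v)$ as the solution of the linear system $\frac{d}{du}\Phi=A\Phi$, exactly as in Equation~(\ref{Eqn:linear system for basis}). As in Proposition~\ref{Proposition:space with tranlated basis}, this gives $\Phi(u+a,v)=e^{Aa}\Phi(u,v)$ for every real $a$. Freezing $u$ and repeating the same reasoning in the $v$ direction gives $\Phi(u,v+b)=e^{Bb}\Phi(u,v)$ for every real $b$. Composing the two translations, say in $u$ first and then in $v$, yields
\[
\Phi(u+a,v+b)=e^{Bb}\,e^{Aa}\,\Phi(u,v)=:C\,\Phi(u,v),
\]
where $C$ is a product of matrix exponentials and is therefore invertible.

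Finally, since $\det C\neq 0$ we have $\Phi(u,v)=C^{-1}\Phi(u+a,v+b)$, so each $\phi_l(u,v)$ lies in the span of the translated functions and conversely; hence $\Omega=\text{span}\{\phi_0(u+a,v+b),\ldots,\phi_L(u+a,v+b)\}$ for arbitrary real $a,b$. Setting $b=0$ or $a=0$ recovers the invariance under translation of $u$ alone or $v$ alone, which completes the statement.

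I expect the only delicate point to be the claim that $A$ and $B$ are genuinely constant matrices rather than matrices depending on the frozen variable. This is exactly what makes the one-variable ODE argument applicable, and it rests on interpreting $\frac{\partial\phi_l}{\partial u}\in\Omega$ as a constant-coefficient expansion in the basis of $\Omega$, regarded as functions of both $u$ and $v$. Once this is pinned down, no commutativity of $A$ and $B$ is required, since the two translations are applied sequentially; the consistency $e^{Bb}e^{Aa}=e^{Aa}e^{Bb}$ then follows a posteriori from the fact that $\Phi(u+a,v+b)$ is independent of the order in which the shifts are performed.
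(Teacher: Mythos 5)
Your proposal is correct and follows essentially the same route as the paper's own proof: closure under the partial differentiations yields constant matrices $A_1$, $A_2$ with $\frac{\partial\Phi}{\partial u}=A_1\Phi$ and $\frac{\partial\Phi}{\partial v}=A_2\Phi$, the frozen-variable ODE gives the matrix-exponential translation identities $\Phi(u+h_1,v)=e^{A_1h_1}\Phi(u,v)$ and $\Phi(u,v+h_2)=e^{A_2h_2}\Phi(u,v)$, and the joint translation is handled by sequential composition of the two (invertible) exponentials, exactly as in the paper. Your added observation that the matrices must be constant because membership in $\Omega$ means a constant-coefficient expansion is a sound clarification of what the paper leaves implicit, and like the paper you correctly avoid any need for $A_1$ and $A_2$ to commute.
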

\begin{proof}
We first prove that the space $\Omega$ is invariant when the parameter $u$ or $v$ has been translated. Let $\Phi(u,v)=(\phi_0(u,v),\phi_1(u,v),\ldots,\phi_L(u,v))^T$. Because $\frac{\partial\phi_l(u,v)}{\partial u}\in\Omega$, $l=0,1,\ldots,L$, there exists matrix $A_1$ such that the basis vector $\Phi(u,v)$ satisfy $\frac{\partial\Phi(u,v)}{\partial u} = A_1\Phi(u,v)$. Therefore, we have $\Phi(u,v)=e^{A_1(u-u_0)}\Phi(u_0,v)$ for any selected real number $u_0$. From this expression of $\Phi(u,v)$, we have $\Phi(u+h_1,v)=e^{A_1 h_1}\Phi(u,v)$. By the same reason, we have $\frac{\partial\Phi(u,v)}{\partial v} = A_2\Phi(u,v)$ and $\Phi(u,v+h_2)=e^{A_2 h_2}\Phi(u,v)$. Denote $C_u^{h_1}=e^{A_1 h_1}$ and $C_v^{h_2}=e^{A_2 h_2}$. Because matrices $C_u^{h_1}$ and $C_v^{h_2}$ are non-singular, it implies that both $\Phi(u+h_1,v)$ and $\Phi(u,v+h_2)$ are basis vectors of the space $\Omega$.

Now we prove that the space $\Omega$ is invariant when both parameters $u$ and $v$ have been translated. In fact, $\Phi(u+h_1,v+h_2)=C_u^{h_1}\Phi(u,v+h_2)=C_u^{h_1}C_v^{h_2}\Phi(u,v)$. Let $C_{u,v}^{h_1,h_2}=C_u^{h_1}C_v^{h_2}$. Since $\det(C_{u,v}^{h_1,h_2})=\det(C_u^{h_1})\det(C_v^{h_2})\neq 0$, we conclude that $\Phi(u+h_1,v+h_2)$ is also the basis vector of space $\Omega$.
\end{proof}

Though the transformation matrices between bases with or without translation of the parameters are defined by exponentials of constant matrices, accurate and efficient evaluation of exponentials of matrices is not a trivial task~\cite{MolerLoan03:19WaysToComputeExpOfMatrix}. We propose to compute the transformation matrices for exponential polynomial bases with translated parameters directly. Particularly, we derive the transformation matrices for polynomials, trigonometric functions or hyperbolic functions using the identities of the functions first and then compute the transformation matrices for even more general basis through their elementary decompositions.

Assume $U_n(t)$, $V(t)$ and $W(t)$ are the basis vectors as given in Subsection~\ref{Subsection:Spaces_Spanned_by_exponential_polynomials}. With simple computation, the transformation for basis vector $U_n(t)$ can be obtained as $U_n(t+h)=M_{U_n}^h U_n(t)$, where
\[
M_{U_n}^h=\left(\begin{array}{ccccc}
                             1 & 0 & 0 & \cdots & 0 \\
                             h & 1 & 0  &  \cdots & 0 \\
                             h^2 & 2h & 1 & \cdots & 0 \\
                             \vdots & \vdots & \vdots & \ddots & \vdots \\
                             h^n & \binom{n}{1}h^{n-1} & \binom{n}{2}h^{n-2} & \cdots & 1
                           \end{array}
                        \right).
\]
Similarly, the transformation matrices for $V(t)$ and $W(t)$ are
\[
M_V^h=\left(\begin{array}{cc}
                    \cos h & -\sin h \\
                    \sin h &  \cos h
                    \end{array}
                \right)
\]
and
\[
M_W^h=\left(\begin{array}{cc}
                    \cosh h &  \sinh h \\
                    \sinh h &  \cosh h
                    \end{array}
                \right),
\]
respectively. It yields that $V(t+h)=M_V^h V(t)$ and $W(t+h)=M_W^h W(t)$.

For a small parameter step $h$, the values of $\cos h$, $\sin h$, $\cosh h$ and $\sinh h$ can be computed efficiently with only arithmetic operations by Taylor expansion:
\[
\begin{array}{lll}
\sin h &=& h-\frac{h^3}{3!}+\frac{h^5}{5!}-\frac{h^7}{7!}+\frac{h^9}{9!}-\frac{h^{11}}{11!}+\frac{h^{13}}{13!}-\frac{h^{15}}{15!} \\
\cos h &=& 1-\frac{h^2}{2!}+\frac{h^4}{4!}-\frac{h^6}{6!}+\frac{h^8}{8!}-\frac{h^{10}}{10!}+\frac{h^{12}}{12!}-\frac{h^{14}}{14!}-\frac{h^{16}}{16!}\\
\sinh h &=& h+\frac{h^3}{3!}+\frac{h^5}{5!}+\frac{h^7}{7!}+\frac{h^9}{9!}+\frac{h^{11}}{11!}+\frac{h^{13}}{13!}+\frac{h^{15}}{15!} \\
\cosh h &=& 1+\frac{h^2}{2!}+\frac{h^4}{4!}+\frac{h^6}{6!}+\frac{h^8}{8!}+\frac{h^{10}}{10!}+\frac{h^{12}}{12!}+\frac{h^{14}}{14!}+\frac{h^{16}}{16!}
\end{array}
\]
The above expressions can be evaluated by Horner algorithm in practice.
From the definition of $M_V^h$ and $M_W^h$ we know that $M_V^h=(M_V^{h/K})^K$ and $M_W^h=(M_W^{h/K})^K$. If the parameter step $h$ is larger than a threshold, for example $0.1$, we can choose a proper integer $K$ and compute the elements of matrix $M_V^{h/K}$ or $M_W^{h/K}$ first, and then compute the matrix $M_V^h$ or $M_W^h$ by matrix multiplication. Similarly, transformation matrices with other parameter steps can be computed by $M^{2h}=(M^h)^2$, $M^{3h}=(M^h)^3$, etc.

Assume the transformation matrices for $\Phi_1(t)$ or $\Phi_2(t)$ are $M_1$ and $M_2$, respectively. The transformation matrix for $\Phi_1(t) \sqcup \Phi_2(t)$ can be computed as
\begin{equation}\label{Eqn:transform of basis union}
\Phi_1(t+h) \sqcup \Phi_2(t+h)=\diag(M_1,M_2) (\Phi_1(t) \sqcup \Phi_2(t)).
\end{equation}
The transformation matrix for $\Phi_1(t) \otimes \Phi_2(t)$ is obtained as follows
\begin{equation}\label{Eqn:transform of basis product}
\Phi_1(t+h) \otimes \Phi_2(t+h)= (M_1 \otimes M_2) (\Phi_1(t) \otimes \Phi_2(t)),
\end{equation}
where $M_1 \otimes M_2$ is also known as the Kronecker product of two matrices.

Similar to the product of two univariate bases, the transformation matrix for the tensor product basis of a surface can be computed easily. Suppose that $\Phi(u,v)=\Phi_1(u)\otimes\Phi_2(v)$, $\Phi_1(u+h_1)=M_1\Phi_1(u)$ and $\Phi_2(v+h_2)=M_2\Phi_2(v)$, the transformation matrix for $\Phi(u,v)$ is computed by
\begin{equation}\label{Eqn:transform of tensor product basis}
\begin{array}{lcl}
\Phi(u+h_1,v+h_2) &=& \Phi_1(u+h_1) \otimes \Phi_2(v+h_2) \\
                &=& (M_1 \otimes M_2) (\Phi_1(u) \otimes \Phi_2(v)) \\
                &=& (M_1 \otimes M_2) \Phi(u,v).
\end{array}
\end{equation}
By Equations (\ref{Eqn:transform of basis union}), (\ref{Eqn:transform of basis product}) and (\ref{Eqn:transform of tensor product basis}), the transformation matrices for even more complicated basis will be computed accurately and robustly.

\subsection{Transformation of polynomial basis with linear transformation of the parameter}
\label{Subsection:Linear transformation of polynomial basis}

In this subsection we show that space of polynomials of degree no more than a given number is invariant with respect to a linear transformation of the parameter. Particularly, the transformation matrix for Bernstein basis will be given.

\begin{proposition}\label{Proposition: power basis invariant space}
Let $\mathbb{P}_n(t)=\text{span}\{1,t,t^2,\ldots,t^n\}$. The space $\mathbb{P}_n(t)$ is invariant with respect to a linear transformation of the parameter.
\end{proposition}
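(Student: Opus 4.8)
The plan is to mirror the structure of the proof of Proposition~\ref{Proposition:space with tranlated basis}: exhibit an explicit transformation matrix between the two bases and then argue that it is non-singular. A linear transformation of the parameter has the form $t \mapsto \alpha t + \beta$ with $\alpha,\beta \in \mathbb{R}$ and $\alpha \neq 0$, so it suffices to show that $\{1,\ \alpha t + \beta,\ (\alpha t + \beta)^2,\ \ldots,\ (\alpha t + \beta)^n\}$ spans the same space as $U_n(t) = (1,t,\ldots,t^n)^T$.

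First I would expand each transformed monomial by the binomial theorem,
\[
(\alpha t + \beta)^k = \sum_{j=0}^{k} \binom{k}{j}\,\alpha^{j}\beta^{k-j}\, t^{j}, \qquad k = 0,1,\ldots,n,
\]
which immediately shows that every $(\alpha t + \beta)^k$ with $k \le n$ lies in $\mathbb{P}_n(t)$. Collecting these coefficients, I would assemble a matrix $N$ of order $n+1$ whose $(k,j)$ entry equals $\binom{k}{j}\alpha^{j}\beta^{k-j}$ for $j \le k$ and vanishes for $j > k$, so that $U_n(\alpha t + \beta) = N\,U_n(t)$. By construction $N$ is lower triangular, and its structure parallels that of the translation matrix $M_{U_n}^h$; indeed, setting $\alpha = 1$ and $\beta = h$ recovers $M_{U_n}^h$ as a special case.

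The key step is then to verify that $N$ is invertible, which is precisely where the non-degeneracy hypothesis $\alpha \neq 0$ enters. Since $N$ is lower triangular, its diagonal entries are $\binom{k}{k}\alpha^{k}\beta^{0} = \alpha^{k}$, so $\det N = \prod_{k=0}^{n}\alpha^{k} = \alpha^{n(n+1)/2} \neq 0$. Hence $N$ is non-singular and $U_n(t) = N^{-1}U_n(\alpha t + \beta)$, which shows $\mathbb{P}_n(t) = \text{span}\{1,\ \alpha t + \beta,\ \ldots,\ (\alpha t + \beta)^n\}$ and completes the argument.

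I do not anticipate a genuine obstacle here: the entire content is the binomial expansion together with the observation that the resulting matrix is triangular with nonzero diagonal. The only point requiring care is to record explicitly that a non-trivial linear transformation has $\alpha \neq 0$, since without it the image would span only the constants and invariance would fail; this degenerate case must be excluded. Once this is in place, the same matrix $N$ furnishes the concrete transformation that specializes to the Bernstein basis, as promised in the discussion preceding the proposition.
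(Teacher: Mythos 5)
Your proof is correct and follows essentially the same route as the paper's: you expand $(\alpha t+\beta)^k$ by the binomial theorem to obtain a lower-triangular transformation matrix (your $N$ is exactly the paper's $C_A$ with $a_0=\alpha$, $a_1=\beta$), then conclude from $\det N=\alpha^{n(n+1)/2}\neq 0$ that $U_n(\alpha t+\beta)$ is another basis of $\mathbb{P}_n(t)$. The explicit exclusion of the degenerate case $\alpha=0$ matches the paper's standing assumption $a_0\neq 0$, so there is nothing to add.
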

\begin{proof}
Let $U_n(t)=(1,t,t^2,\ldots,t^n)^T$. Assume $a_0\neq 0$ and $a_1$ are real numbers. Replacing $t$ within $U_n(t)$ by $a_0t+a_1$, we have
$U_n(a_0 t+a_1)=C_A U_n(t)$, where
\[
C_A=\left(\begin{array}{ccccc}
            1      &    0    &   0    &  \ldots &  0  \\
            a_1    &  a_0    &   0    &  \ldots &  0  \\
            a_1^2  & 2a_0a_1 &  a_0^2 &  \ldots &  0  \\
            \vdots & \vdots  & \vdots &  \ddots & \vdots  \\
            a_1^n  & \binom{n}{1}a_0a_1^{n-1} & \binom{n}{2}a_0^2a_1^{n-2} & \ldots & a_0^n
            \end{array}
    \right).
\]
Because $\det(C_A)=a_0^{n(n+1)/2}\neq 0$, we have $U_n(t)=C_A^{-1}U_n(a_0t+a_1)$. Therefore, $U_n(a_0t+a_1)$ is another set of basis of the space $\mathbb{P}_n(t)$. The proposition is proven.
\end{proof}

Besides power basis, another popular basis used for polynomial curve and surface modeling is Bernstein basis. We derive transformation matrix for Bernstein basis under a linear transformation of the parameter. The transformation matrix will be used for dynamic evaluation of B\'{e}zier curves and surfaces with constant or changing parameter steps.

\begin{proposition}\label{Proposition: Bernstein basis invariant space}
Assume $\Phi_B(t)=(B_{0,n}(t),B_{1,n}(t),\ldots,B_{n,n}(t))^T$, where $B_{i,n}(t)=\frac{n!}{i!(n-i)!}t^i(1-t)^{n-i}$, $i=0,1,\ldots,n$, are Bernstein basis functions. Assume $a\neq b$ are real numbers. Then the basis vector satisfies
\begin{equation}
\label{Eqn:Bernstein basis transform}
\Phi_B((1-t)a+tb)=C_B\Phi_B(t),
\end{equation}
where
\[
C_B=\left(\begin{array}{cccc}
            c_{00} & c_{01} & \ldots & c_{0n}  \\
            c_{10} & c_{11} & \ldots & c_{1n}  \\
            \vdots & \vdots & \ddots & \vdots  \\
            c_{n0} & c_{n1} & \ldots & c_{nn}
            \end{array}
    \right)
\]
and $c_{kl}=\sum\limits^{i+j=k}_{0\leq i\leq l,0\leq j\leq n-l} B_{i,l}(b)B_{j,n-l}(a)$, $0\leq k,l\leq n$.
\end{proposition}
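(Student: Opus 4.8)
The plan is to verify the matrix identity componentwise, that is, to show that the $k$-th component of $\Phi_B((1-t)a+tb)$ equals $\sum_{l=0}^{n}c_{kl}B_{l,n}(t)$ with $c_{kl}$ as stated. The cleanest route I would take avoids computing any exponential of a matrix and instead packages all $n+1$ Bernstein functions at once through the generating function
\[
\sum_{k=0}^{n}B_{k,n}(s)\,x^k=(1-s+sx)^n,
\]
which follows directly from the binomial theorem applied to $B_{k,n}(s)=\binom{n}{k}s^k(1-s)^{n-k}$.

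First I would substitute $s=(1-t)a+tb$ and rewrite the base of the power in a form that exposes the $t$-dependence as a convex-type combination. A short computation using $1-s+sx=1-s(1-x)$ gives $1-s+sx=(1-t)P+tQ$, where $P=1-a+ax$ and $Q=1-b+bx$; here the factors multiplying $1-t$ and $t$ are exactly the two endpoint specializations obtained by setting $t=0$ or $t=1$. This is the one genuinely clever step, and I expect it to be the crux of the argument: once $1-s+sx$ is written as $(1-t)P+tQ$, the binomial theorem in the variable $t$ yields
\[
\sum_{k=0}^{n}B_{k,n}(s)\,x^k=\bigl((1-t)P+tQ\bigr)^n=\sum_{l=0}^{n}B_{l,n}(t)\,P^{n-l}Q^{l}.
\]

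Next I would expand the two endpoint factors separately, again by the binomial theorem, recognizing each expansion as a univariate Bernstein generating function: $P^{n-l}=\sum_{j=0}^{n-l}B_{j,n-l}(a)\,x^j$ and $Q^{l}=\sum_{i=0}^{l}B_{i,l}(b)\,x^i$. Multiplying these and collecting the coefficient of $x^k$ produces precisely $\sum_{i+j=k}B_{i,l}(b)B_{j,n-l}(a)=c_{kl}$, with the index ranges $0\le i\le l$ and $0\le j\le n-l$ forced by the degrees of the two factors. Finally, comparing the coefficient of $x^k$ on both sides of the displayed identity gives $B_{k,n}((1-t)a+tb)=\sum_{l=0}^{n}c_{kl}B_{l,n}(t)$, which is the $k$-th row of $\Phi_B((1-t)a+tb)=C_B\Phi_B(t)$, completing the proof. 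The remaining verifications, namely that each binomial expansion is valid and that the coefficient bookkeeping is correct, are routine. As an alternative one could argue via the blossom: writing $\alpha(t)=(1-t)a+tb$, the coefficient $c_{kl}$ equals the blossom of $B_{k,n}\circ\alpha$ evaluated at the argument with $n-l$ zeros and $l$ ones, which by affine invariance of blossoming becomes the blossom of $B_{k,n}$ at the argument with $n-l$ copies of $a$ and $l$ copies of $b$, and reproduces the same sum; but the generating-function derivation is more self-contained and I would present it as the main argument.
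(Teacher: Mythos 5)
Your proof is correct and is essentially the paper's own argument in different notation: the paper writes $\Phi_B(t)=[(1-t)I+tE]^n\mathbf{e}_0$ with a shift operator $E$ acting on coordinate vectors, which corresponds exactly to your generating function $(1-s+sx)^n$ under the identification $E^k\mathbf{e}_0\leftrightarrow x^k$, and both arguments hinge on the same crux --- rewriting the affine-substituted base as $(1-t)P+tQ$ with $P,Q$ the endpoint specializations at $a$ and $b$ --- followed by the same double binomial expansion producing $c_{kl}$. There is no gap; the two presentations differ only in whether the bookkeeping is done with coefficients of $x^k$ or with the vectors $\mathbf{q}_l=\sum_k c_{kl}\mathbf{e}_k$.
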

\begin{proof}
Let $\mathbf{e}_0=(1,0,0,\ldots,0)^T$, $\mathbf{e}_1=(0,1,0,\ldots,0)^T$, $\ldots$, $\mathbf{e}_n=(0,0,0,\ldots,1)^T$. The basis vector $\Phi_B(t)$ can be represented as $\Phi_B(t)=\sum_{i=0}^n \mathbf{e}_i B_{i,n}(t)$. Assume $I$ and $E$ are identity or shift operators which satisfy $I\mathbf{e}_i=\mathbf{e}_i$ and $E\mathbf{e}_i=\mathbf{e}_{i+1}$. The basis vector can be reformulated as $\Phi_B(t)=[(1-t)I+tE]^n \mathbf{e}_0$. Then we have
\[
\begin{array}{lcl}
\Phi_B((1-t)a+tb) &=&\{[1-(1-t)a-tb]I+[(1-t)a+tb]E\}^n \mathbf{e}_0 \\
                &=&\{[(1-a)I+aE](1-t)+[(1-b)I+b E]t\}^n \mathbf{e}_0 \\
                &=&\sum_{l=0}^n [(1-b)I + bE]^l [(1-a)I + a E]^{n-l}\mathbf{e}_0 B_{l,n}(t) \\
                &=&\sum_{l=0}^n \mathbf{q}_l B_{l,n}(t),
\end{array}
\]
where
\[
\begin{array}{lcl}
\mathbf{q}_l &=&[(1-b)I + bE]^l [(1-a)I + a E]^{n-l}\mathbf{e}_0 \\
    &=&\sum_{i=0}^l E^i B_{i,l}(b) \sum_{j=0}^{n-l} E^j B_{j,n-l}(a) \mathbf{e}_0 \\
    &=&\sum_{k=0}^n\sum^{i+j=k}_{0\leq i\leq l,0\leq j\leq n-l} B_{i,l}(b)B_{j,n-l}(a) E^k \mathbf{e}_0\\
    &=&\sum_{k=0}^n c_{kl}\mathbf{e}_k.
\end{array}
\]
As $\mathbf{q}_l=\sum_{k=0}^n c_{kl} \mathbf{e}_k=(c_{0l},c_{1l},\ldots,c_{nl})^T$, $l=0,1,\ldots,n$, representing $\Phi_B((1-t)a+tb)$ in matrix form, we have
\[
\begin{array}{lcl}
\Phi_B((1-t)a+tb)&=&(\begin{array}{cccc}
                        \mathbf{q}_0 & \mathbf{q}_1 & \ldots & \mathbf{q}_n
                        \end{array}
                        )
                        \left(\begin{array}{c}
                             B_{0,n}(t) \\
                             B_{1,n}(t) \\
                             \vdots \\
                             B_{n,n}(t)
                           \end{array}
                        \right) \\
                    &=&
                        \left(\begin{array}{cccc}
                            c_{00} & c_{01} & \ldots & c_{0n}  \\
                            c_{10} & c_{11} & \ldots & c_{1n}  \\
                            \vdots & \vdots & \ddots & \vdots  \\
                            c_{n0} & c_{n1} & \ldots & c_{nn}
                            \end{array}
                        \right)
                        \left(\begin{array}{c}
                             B_{0,n}(t) \\
                             B_{1,n}(t) \\
                             \vdots \\
                             B_{n,n}(t)
                           \end{array}
                        \right) \\
                    &=&C_B \Phi_B(t).
\end{array}
\]
This proves the proposition.
\end{proof}


\section{Dynamic evaluation of exponential polynomial curves and surfaces}
\label{Section:Dynamic evaluation of exponential polynomial curves and surfaces}

In this section we show that curves and surfaces defined by basis and control points can be evaluated dynamically by applying the basis transformation recursively. If a linear differential system is available, the derivatives of the curve or surface at the evaluated points can be obtained simultaneously.

\subsection{Dynamic evaluation of exponential polynomial curves}
\label{Subsection:evaluation of exponential polynomial curves}

Suppose a free-form curve is defined by $P(t)=\sum_{i=0}^n P_i\phi_i(t)$, where $P_i\in \mathbb{R}^d$, $i=0,1,\ldots,n$, are the control points. Let $\Phi(t)=(\phi_0(t),\phi_1(t),\ldots,\phi_n(t))^T$. The curve can be represented as $P(t)=M_P\Phi(t)$, where $M_P=(P_0,P_1,\ldots,P_n)$ and $\Phi(t)$ is also referred as the normal curve in $\mathbb{R}^{n+1}$~\cite{Goldman2009:IntroductionToGraphics&Modeling}. If $d<n+1$, we first lift all control points in space $\mathbb{R}^{n+1}$ by adding additional coordinates as that presented in~\cite{Yang&Hong2017:DynamicEvaluation}. Assume the lifted curve is $X(t)=\sum_{i=0}^n X_i\phi_i(t)$, where $X_i\in\mathbb{R}^{n+1}$, $i=0,1,\ldots,n$. When a point on $X(t)$ has been evaluated, the point on $P(t)$ will be obtained just by choosing the first few coordinates.

Suppose that the space $\Omega=\text{span}\{\phi_0(t),\phi_1(t),\ldots,\phi_n(t)\}$ is closed under a differentiation and the matrix $M_X=(X_0,X_1,\ldots,X_n)$ is nonsingular. The curve $X(t)$ is formulated as $X(t)=M_X\Phi(t)$. From Proposition~\ref{Proposition:space with tranlated basis} we know that the basis vector satisfies $\Phi(t+h)=C_h\Phi(t)$. Therefore, all points on curve $X(t)$ satisfy $X(t+h)=M_XC_h\Phi(t)$. Substituting $\Phi(t)=M_X^{-1}X(t)$, we have $X(t+h)=M_hX(t)$, where $M_h=M_XC_hM_X^{-1}$. Suppose an initial point at parameter $t=t_0$ is known, points with a constant time step will be computed dynamically as follows
\begin{equation}\label{Eqn:dynamic evaluation for general curve X(t)}
\left\{
\begin{array}{lcl}
X(t_0) & = & X_{orig},\\
X(t_i) & = & M_h X(t_{i-1}), \ \ \ i=1,2,\ldots
\end{array}
\right.
\end{equation}
where $t_i=t_0+hi$.
From Equation (\ref{Eqn:linear system for basis}) we know that $\Phi'(t)=A\Phi(t)$. Then we have $X'(t) = A_c X(t)$, where $A_c = M_X A M_X^{-1}$. The derivatives at the evaluated points are obtained as $X'(t_i)=A_c X(t_i)$, $X''(t_i)=A_c X'(t_i)=A_c^2 X(t_i)$, etc.

As discussed in Subsection~\ref{Subsection:Linear transformation of polynomial basis} the polynomial spaces are invariant under a linear transformation of the parameter. Points with changing parameter steps on a polynomial curve can be evaluated by using a fixed iteration matrix. Assume $\Phi_B(t)$ be the basis vector as given in Proposition~\ref{Proposition: Bernstein basis invariant space}. By applying a linear transformation that maps interval $[0,1]$ to $[a,b]$, the basis vector becomes $\Phi_B((1-t)a+tb)=C_B \Phi_B(t)$. A polynomial curve $X(t)=M_X\Phi_B(t)$ satisfies $X((1-t)a+tb)=M_B X(t)$, where $M_B=M_X C_B M_X^{-1}$. Starting from an initial point $X(t_0)$, a sequence of points on curve $X(t)$ will be computed by
\begin{equation}
\label{Eqn:dynamic evaluation of Bezier curve}
X(t_i)=M_B X(t_{i-1}), \ \ \ i=1,2,\ldots
\end{equation}
where
\[
\begin{array}{lcl}
t_i &=&a+(b-a)t_{i-1} \\
    &=&a+a(b-a)+\cdots+a(b-a)^{i-1}+t_0(b-a)^i \\
    &=&\left\{
        \begin{array}{ll}
        t_0+ia & \text{if} \ (b-a)=1\\
        a\frac{1-(b-a)^i}{1-(b-a)}+t_0(b-a)^i & \text{otherwise}.
        \end{array}
        \right.
\end{array}
\]
A linear differential system that represents the B\'{e}zier curves has been given in~\cite{Yang&Hong2017:DynamicEvaluation}. By the differential system, the derivatives at any evaluated point on a B\'{e}zier curve can be obtained just by multiplying the coefficient matrix with the point.

\subsection{Dynamic evaluation of exponential polynomial surfaces}
\label{Subsec:evaluation of exponential polynomial surfaces}

Similar to free-form curves, a bivariate surface can also be reformulated as the solution to a linear differential system when the space spanned by the basis functions is closed with respect to the partial differentials. Iso-parameter curves and surface curves with skew parametrization can be computed dynamically via basis transformation.

$\bullet$ \textbf{Dynamic evaluation of iso-curves of free-form surfaces.}
Suppose a surface is given by $X(u,v)=\sum_{l=0}^L X_l \phi_l(u,v)$, where $X_l\in \mathbb{R}^{L+1}$, $l=0,1,\ldots,L$, are the control points and $\phi_l(u,v)$, $l=0,1,\ldots,L$, are the basis or blending functions. Assume the matrix $M_X=(X_0,X_1,\ldots,X_L)$ is nonsingular and the space $\Omega=\text{span}\{\phi_0(u,v),\phi_1(u,v),\ldots,\phi_L(u,v)\}$ is closed with respect to the partial differentiations $\frac{\partial}{\partial u}$ and $\frac{\partial}{\partial v}$. Let $\Phi(u,v)=(\phi_0(u,v),\phi_1(u,v),\ldots,\phi_L(u,v))^T$. The surface is reformulated as $X(u,v)=M_X \Phi(u,v)$. From Proposition~\ref{Proposition: invariant surface basis space} we know that $\Phi(u+h_1,v)=C_u^{h_1}\Phi(u,v)$ and $\Phi(u,v+h_2)=C_v^{h_2}\Phi(u,v)$. Let $M_u^{h_1} = M_X C_u^{h_1} M_X^{-1}$ and $M_v^{h_2} = M_X C_v^{h_2} M_X^{-1}$. The iso-parameter curves or points on all $u$-curves or all $v$-curves can be dynamically evaluated by
\begin{equation}
\label{Eqn: evaluate X(u+h1,v)}
X(u+h_1,v)= M_u^{h_1} X(u,v)
\end{equation}
or
\begin{equation}
\label{Eqn: evaluate X(u,v+h2)}
X(u,v+h_2)= M_v^{h_2} X(u,v).
\end{equation}

From the proof of Proposition~\ref{Proposition: invariant surface basis space} we also know that $\frac{\partial\Phi(u,v)}{\partial u}=A_1\Phi(u,v)$ and $\frac{\partial\Phi(u,v)}{\partial v}=A_2\Phi(u,v)$. Therefore, the derivatives of $X(u,v)$ can be computed by
\[
\begin{array}{lcl}
\frac{\partial X(u,v)}{\partial u} &=& A_u X(u,v),  \\
\frac{\partial X(u,v)}{\partial v} &=& A_v X(u,v),
\end{array}
\]
where $A_u=M_X A_1 M_X^{-1}$ and $A_v=M_X A_2 M_X^{-1}$. Besides the first order derivatives, higher order derivatives can also be computed by multiplying these two matrices repeatedly. For example, $\frac{\partial^2 X(u,v)}{\partial u^2}=A_u^2 X(u,v)$, $\frac{\partial^2 X(u,v)}{\partial u\partial v}=A_v A_u X(u,v)$.

$\bullet$ \textbf{Dynamic evaluation of surface curves with skew parametrization.}
In addition to the iso-parameter curves, curves with skew parametrization on a surface can satisfy differential equations and can be evaluated dynamically too. Assume $\gamma(t)=(u(t),v(t))$ is a tangent smooth curve in the parameter domain. A surface curve is obtained as $Q(t)=X(u(t),v(t))$.
The derivative of the surface curve is
\[
\begin{array}{lcl}
Q'(t)&=&\frac{dX(u(t),v(t))}{dt} \\
&=&u'(t)\frac{\partial X(u,v)}{\partial u}+v'(t)\frac{\partial X(u,v)}{\partial v} \\
&=&M_X[u'(t)A_1+v'(t)A_2]\Phi(u(t),v(t)).
\end{array}
\]
Substituting $\Phi(u(t),v(t))=M_X^{-1}X(u(t),v(t))$ into above equation, we have 
\begin{equation}\label{Eqn:linear system for surface curve Q(t)}
\left\{
\begin{array}{lcl}
Q'(t)    & = & A_\gamma Q(t), \ \ \ t\in[\alpha,\beta], \\
Q(\alpha)& = & X(u(\alpha),v(\alpha)),
\end{array}
\right.
\end{equation}
where $A_\gamma=M_X[u'(t)A_1+v'(t)A_2]M_X^{-1}$. In particular, if $u(t)$ and $v(t)$ are linear functions, i.e., $u(t)=u_0+\delta(t-\alpha)$ and $v(t)=v_0+\eta(t-\alpha)$, it yields that $u'(t)=\delta$, $v'(t)=\eta$, and $A_\gamma$ is a constant matrix.

Assume $u(t)$ and $v(t)$ are linear functions, we evaluate a sequence of points on curve $Q(t)=X(u(t),v(t))$ by basis transformation. Suppose that the point $Q(t_i)=X(u_i,v_i)$ is known, we compute $Q(t_i+h)$ as follows
\begin{equation}\label{Eqn:forward mapping surface curve X(t)}
\begin{array}{lcl}
Q(t_i+h)&=& X(u_i+\delta h,v_i+\eta h)    \\
        &=& M_X \Phi(u_i+\delta h,v_i+\eta h) \\
        &=& M_X C_{u,v}^{\delta h,\eta h} \Phi(u_i,v_i) \\
        &=& M_X C_{u,v}^{\delta h,\eta h} M_X^{-1} Q(t_i) \\
        &=& M_{u,v}^{\delta h,\eta h}Q(t_i),
\end{array}
\end{equation}
where $C_{u,v}^{\delta h,\eta h}=C_{u}^{\delta h}C_{v}^{\eta h}$ is the basis transformation matrix as defined in Proposition~\ref{Proposition: invariant surface basis space}. When the matrices $C_{u,v}^{\delta h,\eta h}$ and $M_{u,v}^{\delta h,\eta h}=M_X C_{u,v}^{\delta h,\eta h} M_X^{-1}$ have been obtained, points and derivatives of the curve $Q(t)$ will be evaluated dynamically by Equations (\ref{Eqn:forward mapping surface curve X(t)}) and (\ref{Eqn:linear system for surface curve Q(t)}).

\subsection{Evaluation of curves and surfaces with combined parameter steps}
\label{Subsection:adaptive evaluation}

For efficiency of rendering and machining, points on a curve or surface may be evaluated with non-constant or adaptive parameter steps. For polynomial curves and surfaces, Equation (\ref{Eqn:dynamic evaluation of Bezier curve}) can be employed to evaluate points with changing parameter steps. For general exponential curves and surfaces one may compute transformation matrices with different parameter steps first and then compute points on curves or surfaces using combinations of these transformation matrices.

From Proposition~\ref{Proposition:space with tranlated basis} we know that the transformation of a basis vector satisfies $\Phi(t+h_1+h_2)=C_{h_1+h_2}\Phi(t)=C_{h_1}C_{h_2}\Phi(t)=C_{h_2}C_{h_1}\Phi(t)$. It is also verified that the transformation matrices given by Equation (\ref{Eqn:dynamic evaluation for general curve X(t)}) satisfies $M_{h_1+h_2}=M_{h_1}M_{h_2}=M_{h_2}M_{h_1}$. Then, if one or more points should be added within a curve segment, we can begin with any point on the curve and compute additional points using transformation matrices with smaller parameter steps.

A surface $X(u,v)=M_X \Phi(u,v)$ can be evaluated along $u$-curves, $v$-curves, or curves with skew parametrization using different transformation matrices. From Proposition~\ref{Proposition: invariant surface basis space} we know that the transformation matrices for $\Phi(u,v)$ satisfy $C_{u,v}^{h_1,h_2}=C_u^{h_1}C_v^{h_2}=C_v^{h_2}C_u^{h_1}$. Then the transformation matrices for surface points satisfy $M_{u,v}^{h_1,h_2}=M_u^{h_1}M_v^{h_2}=M_v^{h_2}M_u^{h_1}$. This implies that $X(u+h_1,v+h_2)$ can be computed by a transformation from $X(u,v)$ directly or through intermediate points like $X(u+h_1,v)$ or $X(u,v+h_2)$. Using combinations of matrices $M_u^{h_1}$, $M_v^{h_2}$ and $M_{u,v}^{h_1,h_2}$, one can compute curves with even more complicated parameter steps on a surface.


\section{Examples and comparisons}
\label{Sec:examples}
The proposed algorithm for curve and surface evaluation was implemented using C++ on a laptop with Intel(R) Core(TM) i7-4910MQ CPU@2.90GHz 2.89GHz and 8G RAM. All numbers are represented in double precision. Comparisons between the proposed method and some known algorithms will be given.

\textbf{Example 1.}
In the first example we evaluate an intrinsically defined planar curve
\begin{eqnarray}
\textbf{r}(\theta)=\Bigg(\begin{array}{c}
             x(\theta) \\
             y(\theta)
           \end{array}\Bigg)
         =\Bigg(\begin{array}{c}
             \int_0^\theta \rho(t)\cos tdt \\
             \int_0^\theta \rho(t)\sin tdt
           \end{array}\Bigg)
\label{Eqn:integration_curve}
\end{eqnarray}
where $\rho(t)$ represents the curvature radius of a planar curve and $\theta$ is the angle between the tangent direction of the curve and the positive direction of $x$-axis. Just as in~\cite{WuYang2016:IntrinsicHermite}, we choose $\rho(t)=0.001t^3-0.06t^2+1.5t+0.4$. The Cartesian coordinates of the curve are given by
\[
\begin{array}{lcl}
\textbf{r}(\theta)&=&
    \Big(\begin{array}{c} -1.494 \\ 0.52   \end{array}\Big)
    +\Big(\begin{array}{c} 1.494  \\ -0.52  \end{array}\Big)\cos\theta
    +\Big(\begin{array}{c} 0.52   \\ 1.494  \end{array}\Big)\sin\theta  \\
&&  +\Big(\begin{array}{c} -0.12 \\ -1.494 \end{array}\Big)\theta\cos\theta
    +\Big(\begin{array}{c} 1.494 \\ -0.12  \end{array}\Big)\theta\sin\theta
    +\Big(\begin{array}{c} 0.003 \\ 0.06   \end{array}\Big)\theta^2\cos\theta \\
&&  +\Big(\begin{array}{c} -0.06 \\ 0.003  \end{array}\Big)\theta^2\sin\theta
    +\Big(\begin{array}{c}  0 \\ -0.001    \end{array}\Big)\theta^3\cos\theta
    +\Big(\begin{array}{c}  0.001\\ 0      \end{array}\Big)\theta^3\sin\theta.
\end{array}
\]

Just like that presented in~\cite{Yang&Hong2017:DynamicEvaluation}, we lift the curve from $\mathbb{R}^2$ to $\mathbb{R}^9$. Assume the lifted coefficient vectors are $X_i\in\mathbb{R}^9$, $i=0,1,\ldots,8$. The lifted curve is obtained as $X(\theta)=M_X\Phi(\theta)$, where $M_X=(X_0,X_1,\ldots,X_8)$ and $\Phi(\theta)$ is the corresponding basis vector. From Section~\ref{Sec:Basis transformation for spaces composed of exponential polynomials} we know that $\Phi(\theta)=U_0(\theta)\sqcup U_3(\theta)\otimes V(\theta)$, where $U_n(\theta)$ and $V(\theta)$ are elementary bases as defined in Subsection~\ref{Subsection:Spaces_Spanned_by_exponential_polynomials}. Based on Equations (\ref{Eqn:transform of basis union}) and (\ref{Eqn:transform of basis product}), the transformation of the basis vector with a translated parameter step is obtained as $\Phi(\theta+h)=C_h \Phi(\theta)$, where $C_h=\diag(M_{U_0}^h, M_{U_3}^h \otimes M_V^h)$. Furthermore, we compute $M_h=M_X C_h M_X^{-1}$. By this matrix, points with a fixed parameter step $h$ are computed consequently according to Equation (\ref{Eqn:dynamic evaluation for general curve X(t)}). When a point $X(\theta)$ has been computed, the point $\textbf{r}(\theta)$ is obtained by choosing the first two coordinates.

\begin{table}[htbp]
\centering
\caption{Maximum deviations for evaluating the intrinsically defined curve.}
\begin{tabular}{|c|c|c|}
\hline
\#points & basis transformation & Taylor's method  \\
\hline
10      &  4.261E-14 & 2.916E+1  \\
\hline
20      &  5.153E-14 & 3.026E-1  \\
\hline
100     &  1.196E-13 & 1.971E-5  \\
\hline
200     &  2.160E-13 & 3.087E-7  \\
\hline
1000    &  6.407E-13 & 1.967E-11  \\
\hline
2000    &  1.467E-12 & 3.884E-13  \\
\hline
10000   &  4.606E-12 & 1.125E-12  \\
\hline
20000   &  3.954E-12 & 1.306E-11  \\
\hline
\end{tabular}\label{Table:Maximum errors for intrinsically defined curve}
\end{table}

In our experiments, we compute points on the curve that is defined on the parameter interval $[0,8\pi]$. Particularly, the start point is obtained as $X(0)=X_0+X_1$. The parameter step is chosen as $h=\frac{8\pi}{m}$ when $m$ points are to be computed on the curve. As $\rho(\theta)$ increases when $\theta$ changes from 0 to $8\pi$, the deviations from the evaluated points to the exact ones may increase too. To measure the accuracy of the proposed evaluation method, we compute the Euclidean distance from the last evaluated point to point $\mathbf{r}(8\pi)$ which is computed by loading the math library. From~\cite{Yang&Hong2017:DynamicEvaluation} we know that the lifted curve $X(\theta)$ satisfies a linear differential system and the points on the curve can be evaluated by employing Taylor's method or implicit mid-point scheme to solve the differential system. Since the implicit mid-point scheme has only quadratic precision for solving a linear differential system, we compare the results by the proposed method only with Taylor's method. The maximum deviations for evaluating points by basis transformation or by Taylor's method with various choices of number $m$ are given in Table~\ref{Table:Maximum errors for intrinsically defined curve}. From the table we can see that the evaluation accuracy by Taylor's method (expansion order $s=6$) depends heavily on the parameter steps while the presented basis transformation approach can always give high accuracy results for various choices of point numbers.

\textbf{Example 2.}
The second example is about dynamic evaluation of a B\'{e}zier curve with fixed or changing parameter steps.

\begin{figure}[htbp]
  \centering
  \subfigure[]{\includegraphics[width=5.2cm]{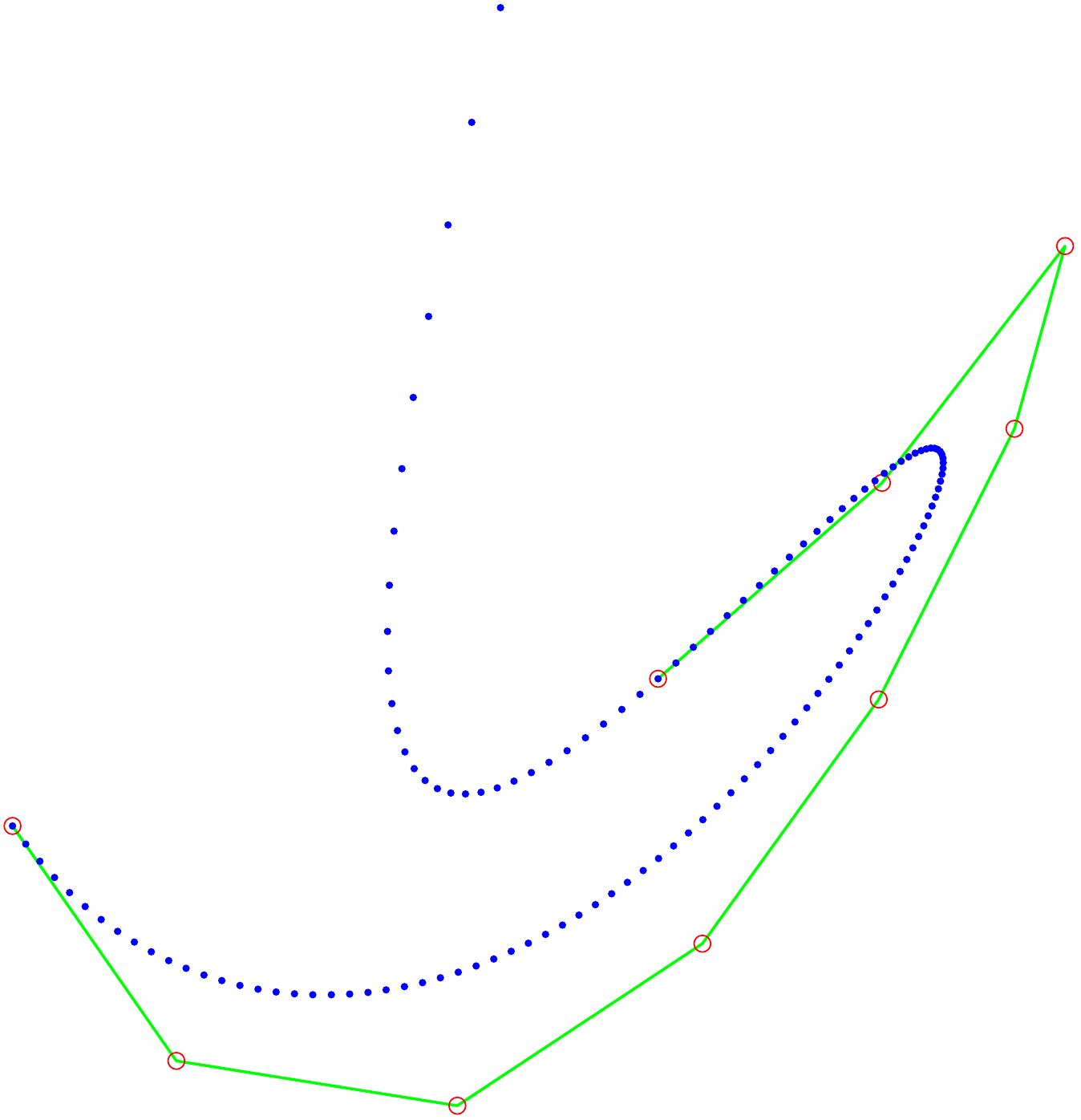}}
  \subfigure[]{\includegraphics[width=5.2cm]{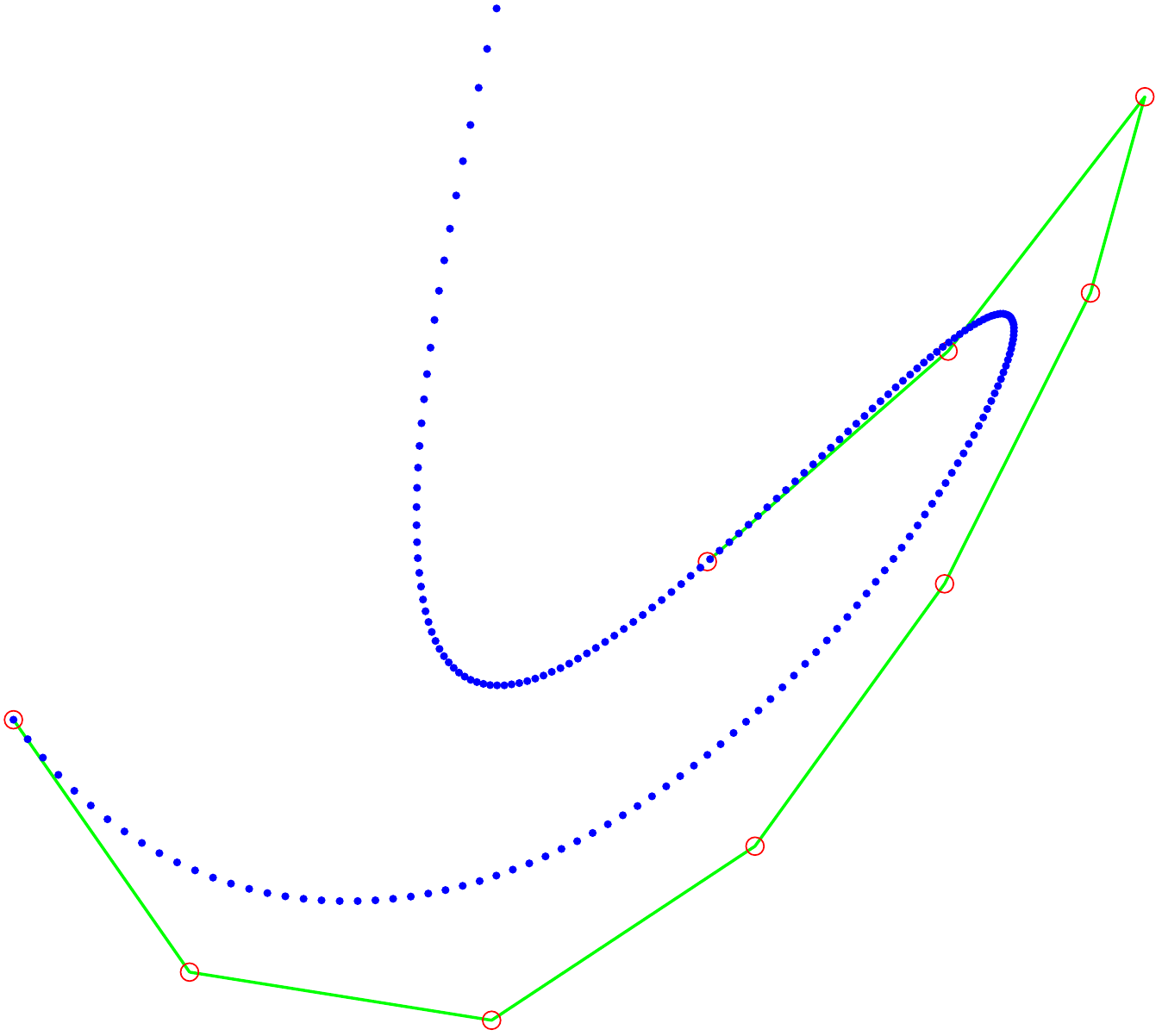}}\\
  \subfigure[]{\includegraphics[width=5.2cm]{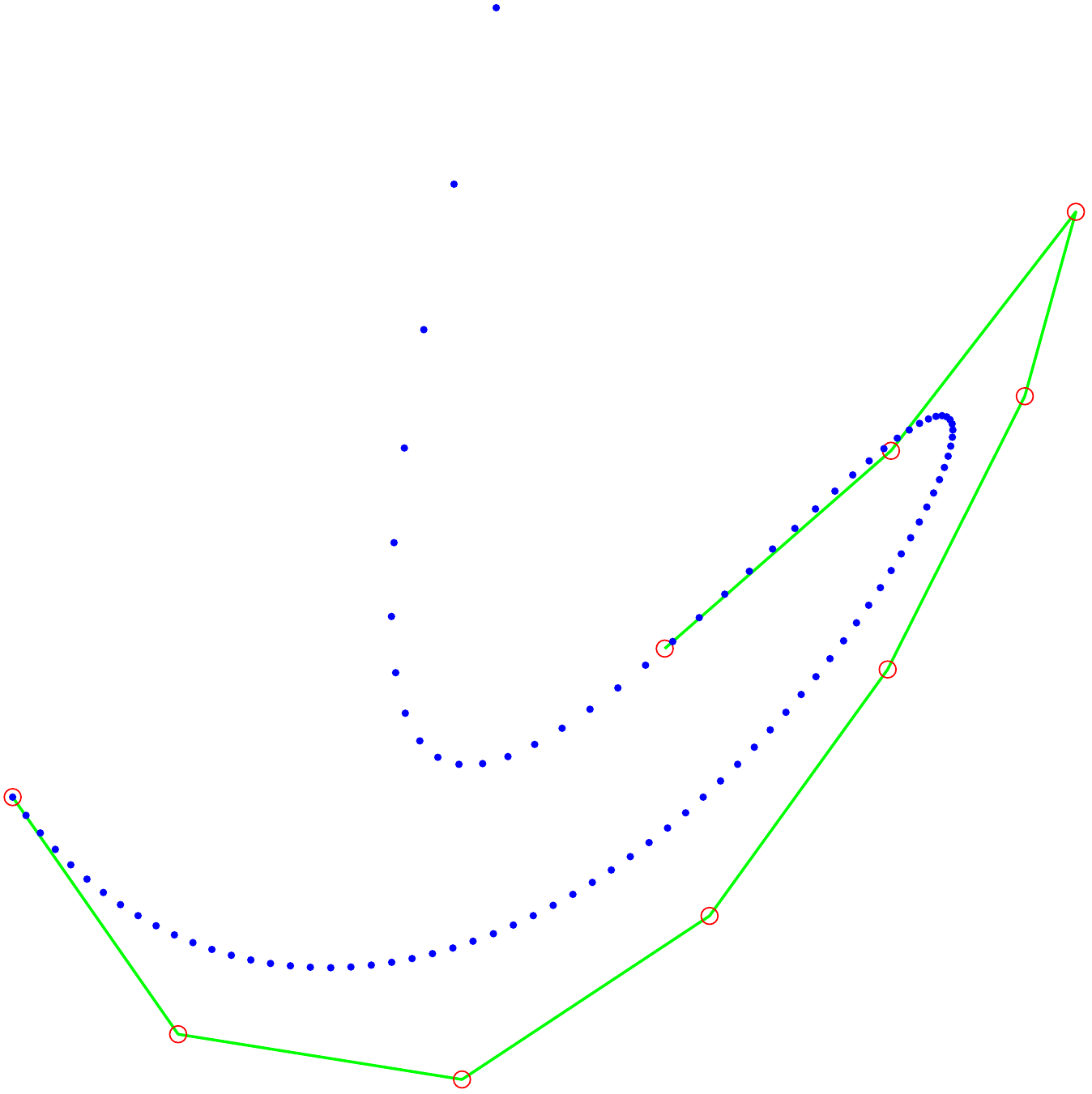}}
  \subfigure[]{\includegraphics[width=5.4cm]{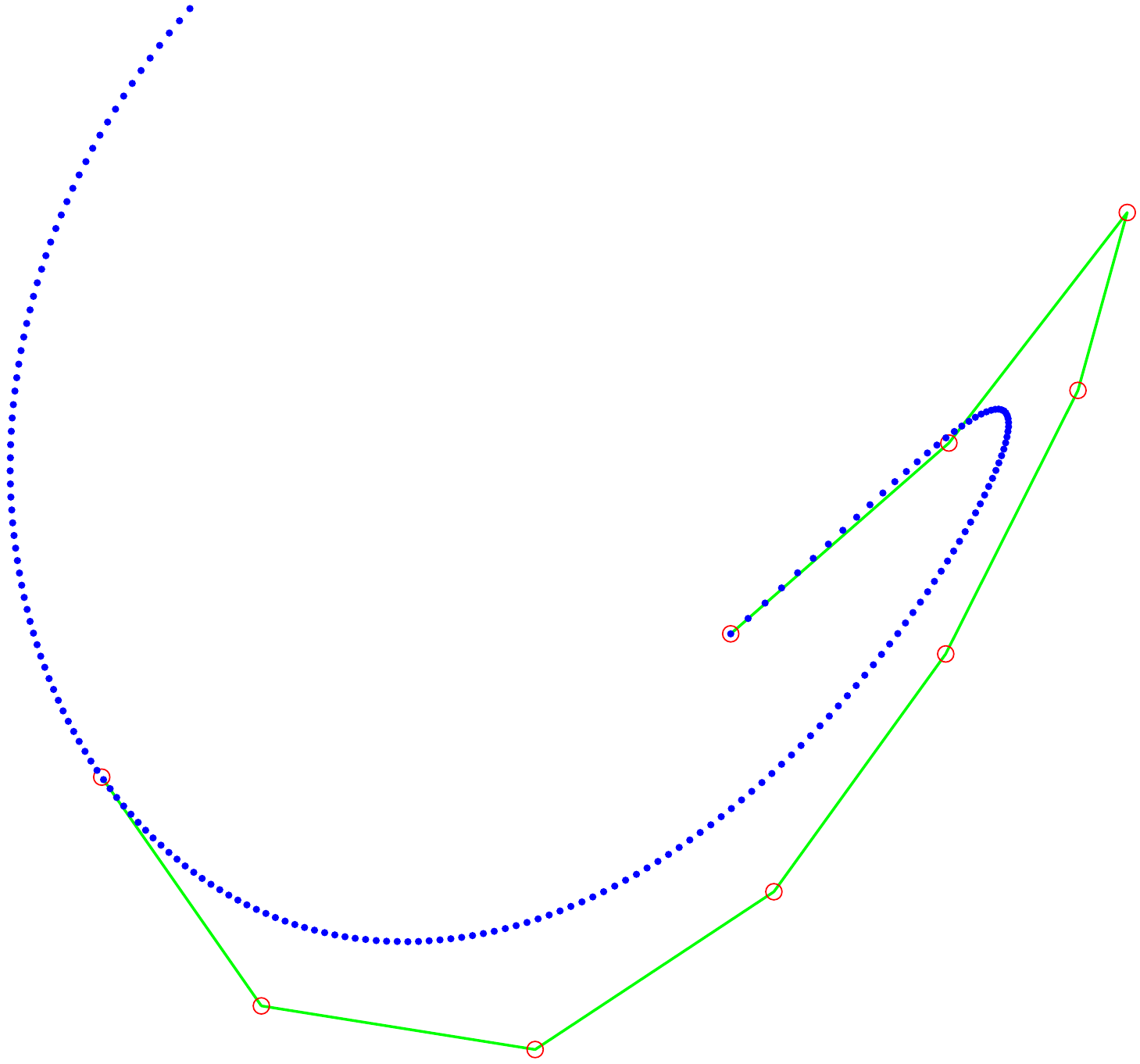}}
  \caption{Dynamic evaluation of a B\'{e}zier curve with fixed or changing parameter steps. (a) $a=0.01,b=1.01$; (b) $a=0.01,b=1.005$; (c) $a=0.01,b=1.015$; (d) $a=-0.005,b=0.99$. }
  \label{Fig:Bezier_curve}
\end{figure}

Figure~\ref{Fig:Bezier_curve} illustrates a planar B\'{e}zier curve of degree 8. To evaluate the curve by the proposed dynamic algorithm, we lift the curve from $\mathbb{R}^2$ to $\mathbb{R}^9$. Assume the lifted B\'{e}zier curve is $X(t)=\sum_{i=0}^8X_iB_{i,8}(t)$. It can then be represented as $X(t)=M_X \Phi_B(t)$, where $M_X$ and $\Phi_B(t)$ are the coefficient matrix or the basis vector, respectively. For any two distinctive real numbers $a$ and $b$, we compute a basis transformation matrix $C_B$ by Equation (\ref{Eqn:Bernstein basis transform}) and then a curve transformation matrix $M_B$ by Equation (\ref{Eqn:dynamic evaluation of Bezier curve}). By matrix $M_B$ we compute points in $\mathbb{R}^9$ and obtain points in the plane.

We first compute points by choosing $X(t_0)=X_0$, $a=0.01$ and $b=1.01$. The obtained points are plotted in Figure~\ref{Fig:Bezier_curve}(a). Because $b-a=1$, a sequence of points with a fixed forward parameter step have been obtained. By choosing $a=0.01$ and $b=1.005$, a sequence of points with decreasing parameter steps have been obtained by applying Equation (\ref{Eqn:dynamic evaluation of Bezier curve}) recursively. See Figure~\ref{Fig:Bezier_curve}(b) for the computed points starting from $X_0$. Similarly, points with increasing parameter steps can be obtained when we choose $a=0.01$ and $b=1.015$. See Figure~\ref{Fig:Bezier_curve}(c) for the evaluated points. When we choose $a=-0.005$, $b=0.99$ and $X(t_0)=X_8$, a sequence of points with decreasing parameter steps can be computed starting from the right end point of the curve. See Figure~\ref{Fig:Bezier_curve}(d).

From the above results we can see that different basis transformations can lead to different sampling speeds or directions on the curve. One can then tune the sampling speed or sampling direction adaptively by choosing various transformation matrices. Because a linear differential system can be constructed from each given B\'{e}zier curve~\cite{Yang&Hong2017:DynamicEvaluation}, derivatives at the sampled points can be evaluated directly using the differential system. As analyzed in~\cite{Yang&Hong2017:DynamicEvaluation}, dynamic evaluation of B\'{e}zier curves needs much less time than evaluating points individually using classical de Casteljau algorithm, even though both of the two algorithms have $O(n^2)$ time complexity.

\textbf{Example 3.}
In the third example we show how to evaluate piecewise smooth curves on a tensor product B\'{e}zier surface by the proposed algorithm.

Assume a B\'{e}zier surface of degree $5\times7$ is given by
\[
S(u,v)=\sum_{i=0}^5\sum_{j=0}^7 P_{ij}B_{i,5}(u)B_{j,7}(v), \ \ \ \ \ (u,v)\in[0,1]^2,
\]
where $P_{ij}\in \mathbb{R}^3$ are the control points. To evaluate points on the surface we first reformulate the surface in matrix form
\[
S(u,v)=\left(
            \begin{array}{ccc}
            B_{0,5}(u) & \cdots & B_{5,5}(u)
            \end{array}
        \right)
        \left(\begin{array}{ccc}
                             P_{0,0} & \cdots & P_{0,7} \\
                             \vdots  & \ddots & \vdots \\
                             P_{5,0} & \cdots & P_{5,7}
                           \end{array}
                        \right)
                        \left(\begin{array}{c}
                             B_{0,7}(v) \\
                             \vdots \\
                             B_{7,7}(v)
                           \end{array}
\right).
\]
Let $M_L=(P_{0,0},\ldots,P_{0,7},\ldots,P_{5,0},\ldots,P_{5,7})$, $\Phi_{B_n}(t)=(B_{0,n}(t),\ldots,B_{n,n}(t))^T$ and $\Phi(u,v)=\Phi_{B_5}(u)\otimes\Phi_{B_7}(v)$. The B\'{e}zier surface can be further represented as $S(u,v)=M_L\Phi(u,v)$. As $M_L$ is a $3\times48$ matrix, using the technique presented in~\cite{Yang&Hong2017:DynamicEvaluation}, we lift the B\'{e}zier surface from $\mathbb{R}^3$ to $\mathbb{R}^{48}$. Assume $M_X$ is the lifted matrix of order 48 and $X_{ij}$ are the lifted control points, the lifted surface becomes $X(u,v)=M_X\Phi(u,v)$. When $X(u,v)$ has been evaluated, the point $S(u,v)$ is obtained by choosing the first three coordinates of $X(u,v)$.

To derive the matrices for dynamic evaluation of curves on the B\'{e}zier surface, we first compute the transformation matrices for bases $\Phi_{B_5}(u)$ and $\Phi_{B_7}(v)$. Assume the translated parameter step is $h$, we choose $a=h$ and $b=1+h$. From Equation (\ref{Eqn:Bernstein basis transform}) we have the transformation matrices $C_{B_5}$ or $C_{B_7}$ for the basis $\Phi_{B_5}(u)$ or $\Phi_{B_7}(v)$. Then, the transformation matrices for the basis $\Phi(u,v)$ with translated parameter $u$ or $v$ are obtained as $C_u^h=C_{B_5}\otimes I_8$ or $C_v^h=I_6\otimes C_{B_7}$, respectively. Now, the matrices for dynamic evaluation of points on $u$-curves or $v$-curves on the lifted surface $X(u,v)$ with a fixed parameter step $h$ are obtained as $M_u^h=M_X C_u^h M_X^{-1}$ and $M_v^h=M_X C_v^h M_X^{-1}$. If we replace the parameter step $h$ by $-h$, we have matrices $M_u^{-h}$ and $M_v^{-h}$ for dynamic evaluation of $u$-curves or $v$-curves in the opposite directions.

\begin{figure}[htbp]
  \centering
  \subfigure[]{\includegraphics[width=6.0cm]{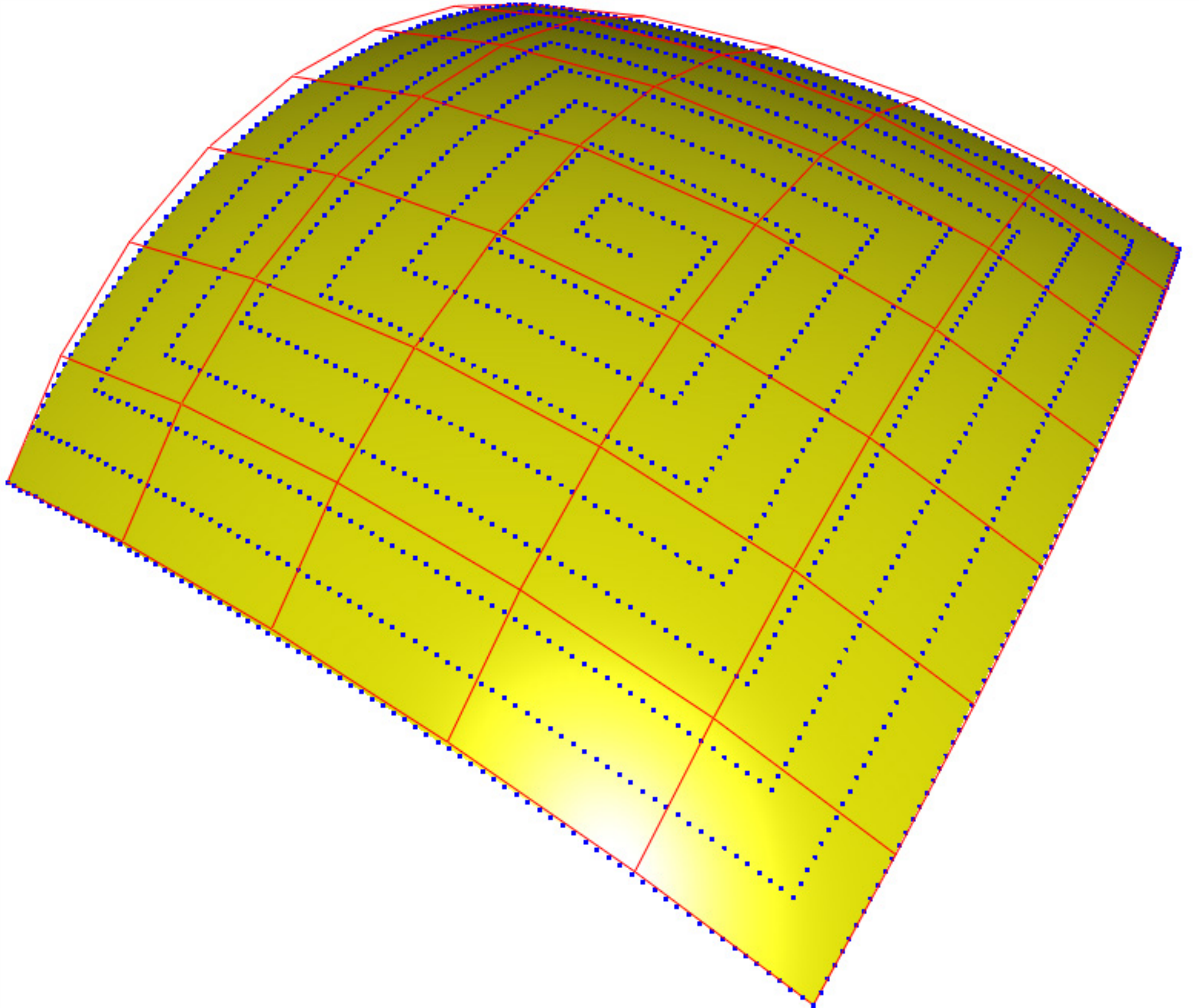}}
  \subfigure[]{\includegraphics[width=6.0cm]{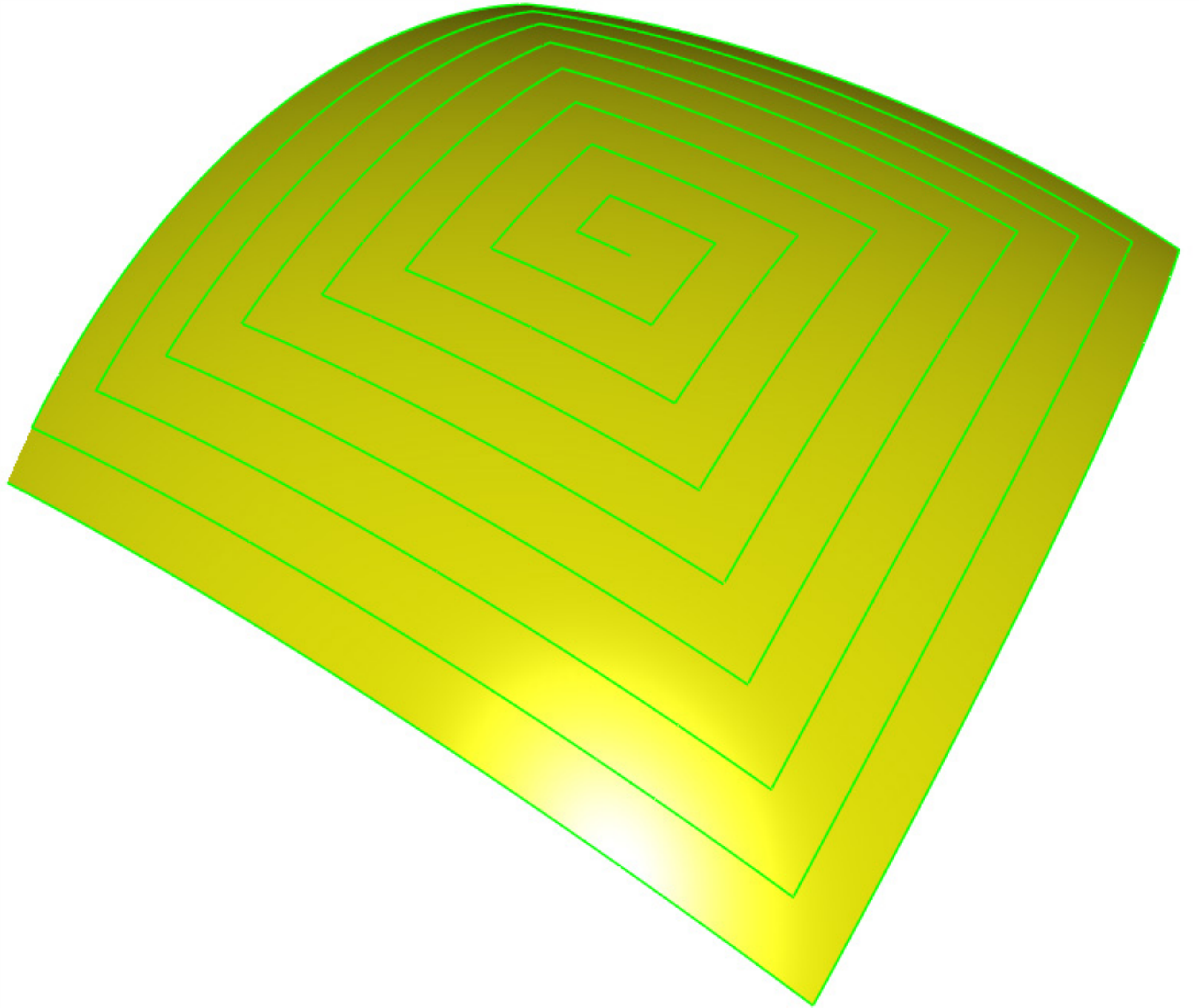}}
  \caption{Dynamic evaluation of curves on a B\'{e}zier patch: (a) the tensor-product B\'{e}zier patch and the evaluated points; (b) the curves generated by the evaluated points. }
  \label{Fig:Bezier patch}
\end{figure}

In our experiments, we choose $h=\frac{1}{80}$ and $X_{0,0}$ as the initial point for dynamic evaluation of a piecewise smooth curve that is consisting of 33 pieces of full or partial iso-parameter curves. Particularly, we evaluate points on $u$-curves with parameter step $h$, $v$-curves with parameter step $h$, $u$-curves with parameter step $-h$ and $v$-curves with parameter step $-h$, alternately. Assume the curve segments are numbered as $j=0,1,\ldots,32$. The point number for each curve segment is chosen as $m_j=80-5\times[(j-1)/2]$, where $[(j-1)/2]$ means the integer part of a real number. When points on a specified curve segment have been evaluated, the obtained last point is chosen as the start point for dynamic evaluation of next curve segment. See Figure~\ref{Fig:Bezier patch}(a) for the evaluated points and Figure~\ref{Fig:Bezier patch}(b) for the obtained piecewise curve. We note that the lastly evaluated point by the proposed technique is corresponding to the center of the surface. Assume the distance between two corner control points $P_{0,0}$ and $P_{5,7}$ is 1. The absolute error between the last point obtained by the proposed algorithm and $S(0.5,0.5)$ computed by conventional de Casteljau algorithm is $7.931\times 10^{-13}$.

\textbf{Example 4.}
In the fourth example we evaluate a family of iso-parameter curves on a helicoidal patch. Suppose the surface patch is given by
\begin{equation}
\left\{
\begin{array}{lcl}
x(u,v) &=& (2+u)\cos v, \\
y(u,v) &=& (2+u)\sin v, \ \ \ \ \ (u,v)\in[0,2]\times[0,4\pi]. \\
z(u,v) &=& v,
\end{array}
\right.
\label{Eqn:helicoidal patch}
\end{equation}
Let $\Phi(u,v)=(1,v,\cos v,\sin v,u\cos v,u\sin v)^T$. It is easily verified that the space spanned by the basis $\Phi(u,v)$ is closed with respect to partial differentiations $\frac{\partial}{\partial u}$ and $\frac{\partial}{\partial v}$. To evaluate the surface by the dynamic algorithm, we lift the surface from $\mathbb{R}^3$ to $\mathbb{R}^6$ by adding three more coordinates to the coefficients. The lifted surface represented in matrix form is
\[
X(u,v)=\left(\begin{array}{cccccc}
                             0 & 0 & 2 & 0 & 1 & 0 \\
                             0 & 0 & 0 & 2 & 0 & 1 \\
                             0 & 1 & 0 & 0 & 0 & 0 \\
                             1 & 0 & 0 & 0 & 0 & 0 \\
                             0 & 0 & 0 & 0 & 1 & 0 \\
                             0 & 0 & 0 & 0 & 0 & 1
                           \end{array}
                        \right)
                        \left(\begin{array}{c}
                             1 \\
                             v \\
                             \cos v \\
                             \sin v \\
                             u\cos v \\
                             u\sin v
                           \end{array}
\right).
\]
Denote the coefficient matrix of $X(u,v)$ as $M_X$. It yields that $X(u,v)=M_X\Phi(u,v)$. When $X(u,v)$ has been evaluated, the point $(x(u,v),y(u,v),z(u,v))$ is obtained by choosing the first three coordinates.

\begin{figure}[htbp]
  \centering
  \subfigure[]{\includegraphics[width=4cm]{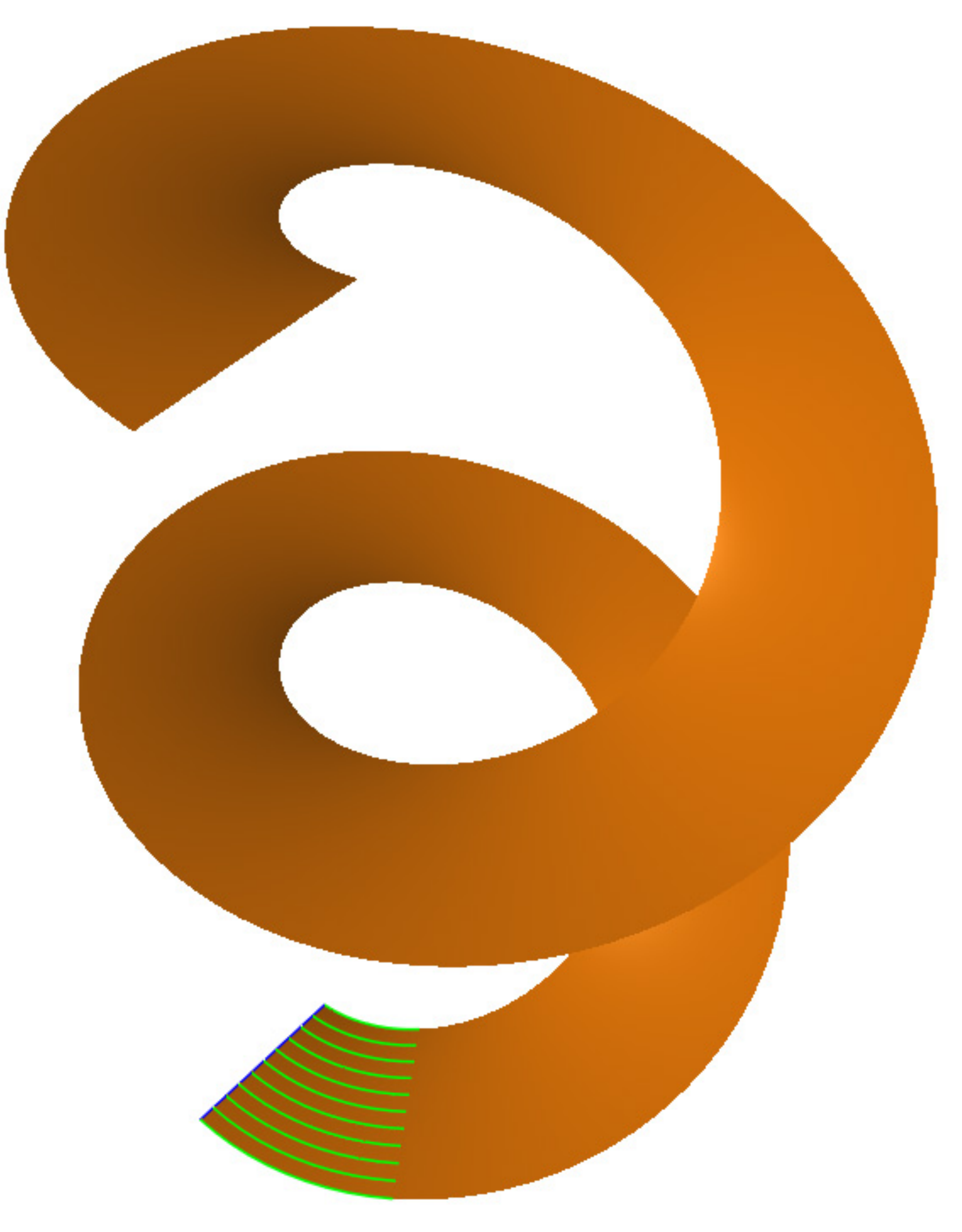}}
  \subfigure[]{\includegraphics[width=4cm]{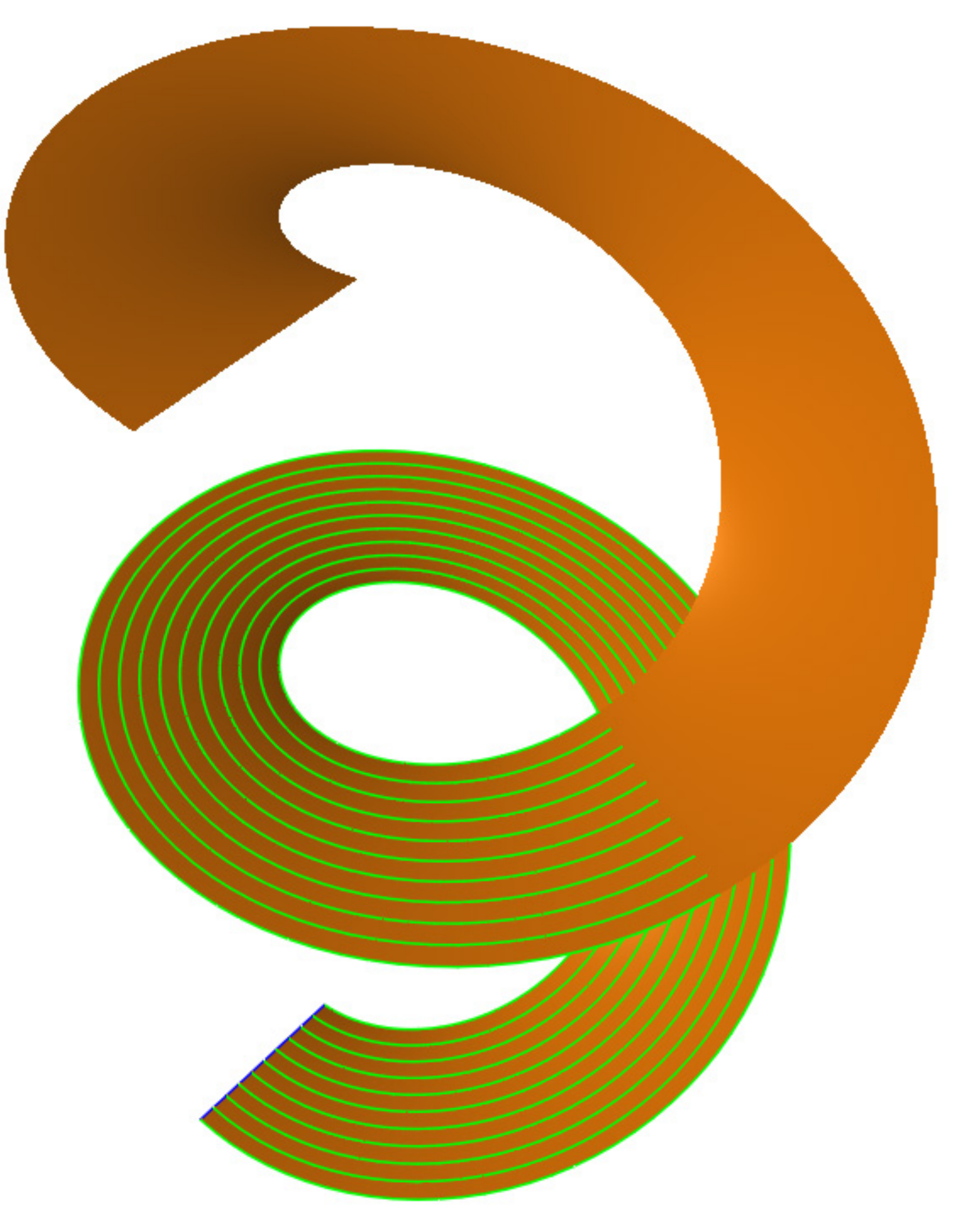}}
  \subfigure[]{\includegraphics[width=4cm]{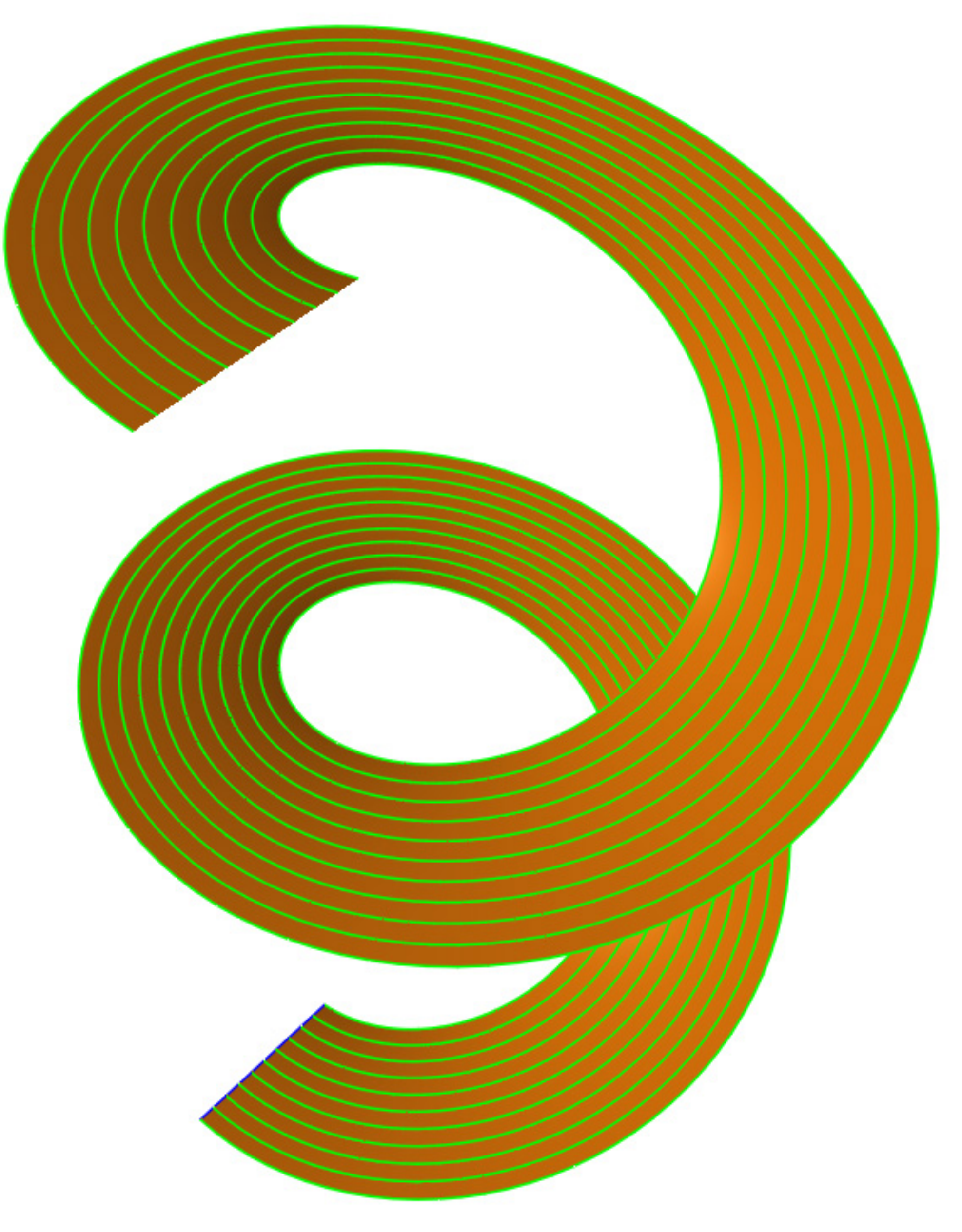}}
  \caption{Dynamic evaluation of a set of iso-parameter curves on the helicoid patch. The results are obtained by (a)10, (b)120 or (c)200 steps of evaluation.}
  \label{Fig:helicoid patch}
\end{figure}

Because the basis vector $\Phi(u,v)$ can be decomposed as $\Phi(u,v)=U_1(v) \sqcup U_1(u) \otimes V(v)$, the transformation matrix for the basis vector $\Phi(u,v)$ with respect to the translation of parameter $v$ is obtained as $C_v^h=\diag(M_{U_1}^h, I_2 \otimes M_V^h)$. Because $\det(M_X)=-4\neq 0$, we compute a transformation matrix as $M_v^h=M_X C_v^h M_X^{-1}$. The points on any surface curve with a fixed parameter $u$ are then computed by
\begin{equation}
\left\{
\begin{array}{lcl}
X(u,v+h) & = & M_v^hX(u,v), \\
X(u,0)   & = & \left(\begin{array}{c}
                             2+u \\
                             0 \\
                             0 \\
                             1 \\
                             u \\
                             0
                           \end{array}
               \right),
               \ \ \ \ \ u\in[0,2].
\end{array}
\right.
\label{Eqn:dynamic evaluation for X(u,v)}
\end{equation}
According to Equation (\ref{Eqn:dynamic evaluation for X(u,v)}), points on a family of $v$-curves on the surface are obtained iteratively starting from a set of points on the boundary line. Figure~\ref{Fig:helicoid patch} illustrates the evaluated results after 10, 120 or 200 steps of evaluation, where $u=0, 0.2, 0.4, \ldots, 2$ and the parameter step is chosen as $h=\frac{4\pi}{200}$.

\textbf{Example 5.}
Lastly, we evaluate curves with skew parametrization on a Dupin-Cyclide. Let $a=6$, $b=4\sqrt{2}$, $c=2$ and $\mu=3$. The Cartesian coordinates of a Dupin-Cyclide are given by~\cite{Roth2015:DescrptionCurvesInExtChebSpace}
\begin{equation}
\left\{
\begin{array}{lcl}
x(u,v) &=& \frac{x_1(u,v)}{x_4(u,v)},   \\
y(u,v) &=& \frac{x_2(u,v)}{x_4(u,v)}, \ \ \ \ \ (u,v)\in[0,2\pi]^2, \\
z(u,v) &=& \frac{x_3(u,v)}{x_4(u,v)},
\end{array}
\right.
\label{Eqn:Dupin_Cyclide_definition}
\end{equation}
where
\[
\left\{
\begin{array}{lcl}
x_1(u,v) &=& \mu(c-a\cos u\cos v)+b^2\cos u,   \\
x_2(u,v) &=& b\sin u(a-\mu\cos v),  \\
x_3(u,v) &=& b\sin v(c\cos u-\mu),  \\
x_4(u,v) &=& a-c\cos u\cos v.
\end{array}
\right.
\]
To evaluate the Cartesian coordinates of the surface, we should compute the homogeneous coordinates first.
Let
\[
M_H=
\left(\begin{array}{ccccccccc}
                             \mu c & 0 & 0 & 0 & 0 & 0 & b^2 & 0 & -\mu a \\
                             0 & 0 & 0 & ab & 0 & -\mu b & 0 & 0 & 0 \\
                             0 & -\mu b & 0 & 0 & 0 & 0 & 0 & bc & 0 \\
                             a & 0 & 0 & 0 & 0 & 0 & 0 & 0 & -c
                           \end{array}
                        \right),
\]
and $\Phi(u,v)=(1,\sin v,\cos v,\sin u,\sin u\sin v,\sin u\cos v,\cos u,\cos u\sin v,\cos u\cos v)^T$. The homogeneous coordinates of the surface are represented as $X_H(u,v)=M_H\Phi(u,v)$. To evaluate the homogeneous coordinates dynamically, we lift $X_H(u,v)$ from $\mathbb{R}^4$ to $\mathbb{R}^9$. Assume $M_H=(H_1,H_2)$, where $H_1$ and $H_2$ are the $4\times5$, $4\times4$ sub-matrices, respectively. Let
\[
M_X=\left(\begin{array}{cc}
            H_1 & H_2 \\
            I_5 & 0
           \end{array}
    \right),
\]
where $I_5$ is the identity matrix of order 5. The lifted homogeneous surface is obtained as $X(u,v)=M_X\Phi(u,v)$.

Let $\Theta(t)=(1,\sin t,\cos t)^T$. It yields that $\Phi(u,v)=\Theta(u)\otimes\Theta(v)$. It is easily verified that spaces spanned by $\Theta(t)$ or $\Phi(u,v)$ are closed with respect to a differentiation or translation of the parameters. We have $\Theta(t+h)=C_h\Theta(t)$ and $\Phi(u+h_1,v+h_2)=C_{u,v}^{h_1,h_2}\Phi(u,v)$, where
\[
C_h=
\left(\begin{array}{ccc}
            1   &   0       &   0       \\
            0   &   \cos h  &   \sin h \\
            0   &  -\sin h  &   \cos h
      \end{array}
\right)
\]
and $C_{u,v}^{h_1,h_2}=C_{h_1}\otimes C_{h_2}$. As the inverse of matrix $M_X$ is
\[
M_X^{-1}=
\left(\begin{array}{cc}
            0       &   I_5        \\
            H_2^{-1}  &  -H_2^{-1}H_1
      \end{array}
\right),
\]
we have $X(u+h_1,v+h_2)=M_{u,v}^{h_1,h_2}X(u,v)$, where $M_{u,v}^{h_1,h_2}=M_X C_{u,v}^{h_1,h_2} M_X^{-1}$. Starting from any point $X(u_0,v_0)$, a sequence of points on the surface will be computed by
\begin{equation}
\left\{
\begin{array}{lcl}
X(u+h_1,v+h_2) & = & M_{u,v}^{h_1,h_2}X(u,v), \\
X(u_0,v_0)     & = & M_X\Phi(u_0,v_0).
\end{array}
\right.
\label{Eqn:Dynamic evaluation of Dupin_Cyclide}
\end{equation}

\begin{figure}[htbp]
  \centering
  \subfigure[]{\includegraphics[width=6cm]{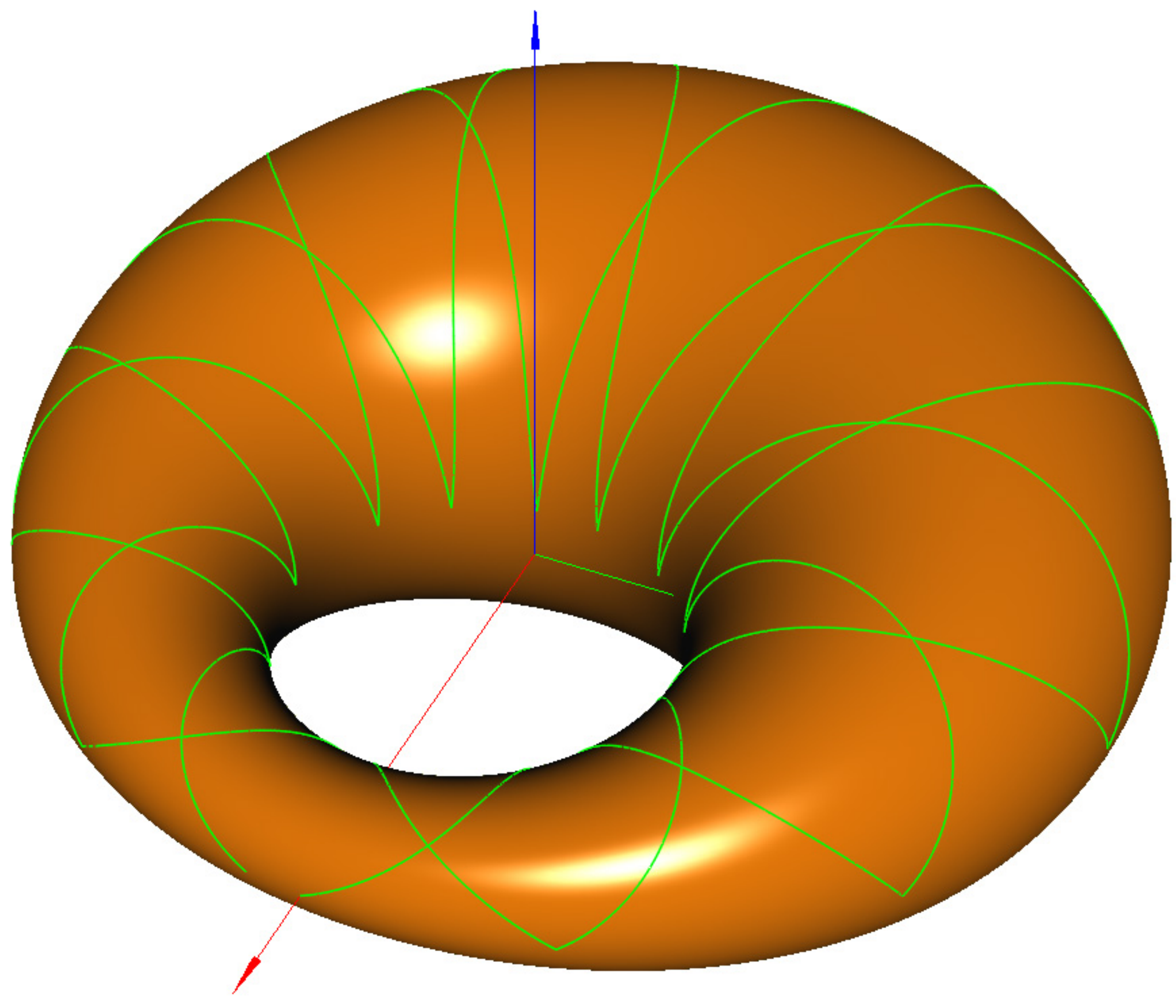}}
  \subfigure[]{\includegraphics[width=6cm]{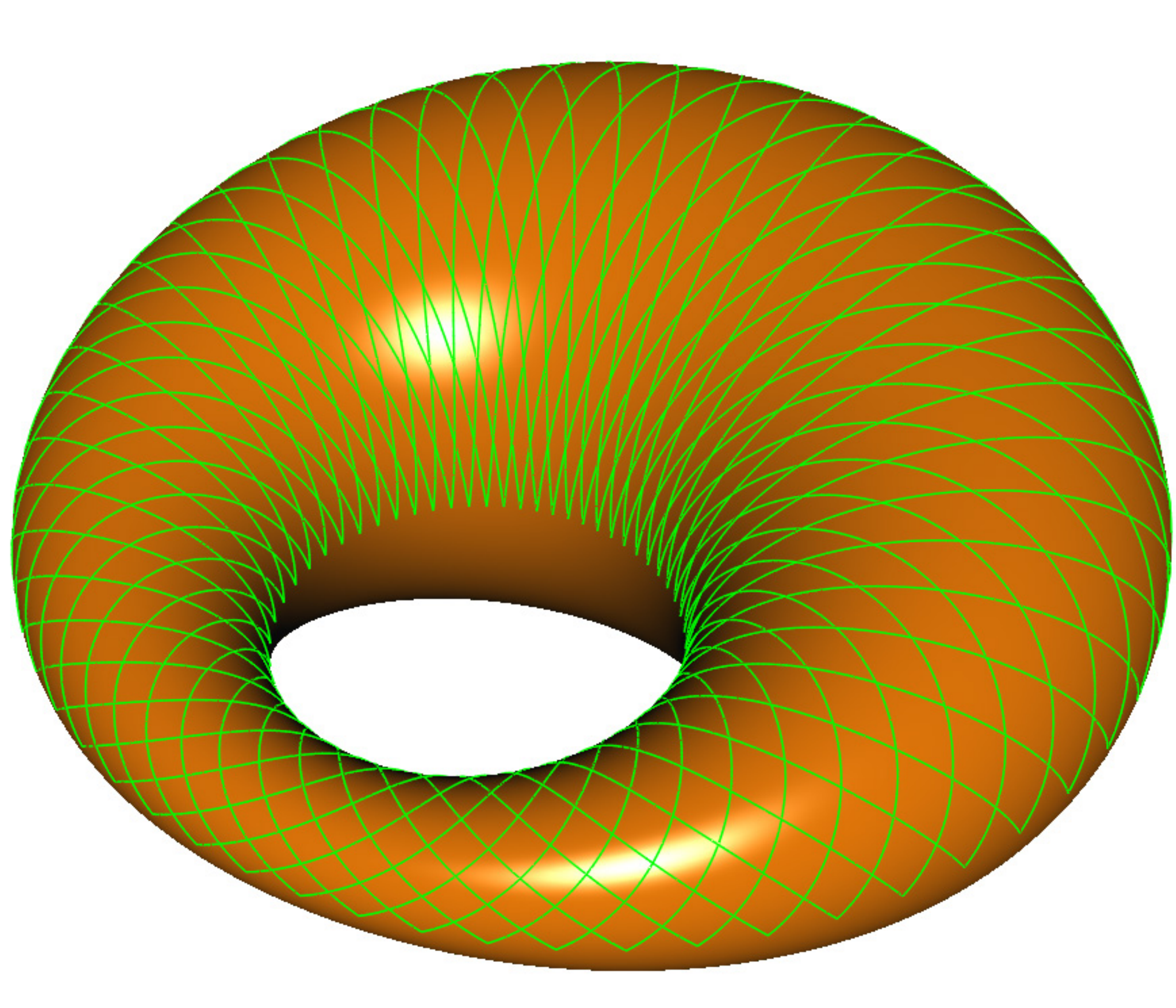}}
  \caption{Dynamic generation of piecewise curves with skew parametrization on the Dupin-Cyclide: (a) 22 pieces; (b) 100 pieces.  }
  \label{Fig:Dupin Cyclide}
\end{figure}

In our experiments, we choose $u_0=0$, $v_0=\pi$ and the start point is obtained as the outer intersection point between the surface and the $x$-axis (the red arrow line) in the positive direction; see Figure~\ref{Fig:Dupin Cyclide}(a). To evaluate points on piecewise surface curves with skew parametrization, we choose the parameter steps as $h_1=\frac{0.18\pi}{m}$ and $h_2=\frac{\pi}{m}$, where $m$ is the number of points that will be computed on each piece of surface curve. By choosing $m=100$, two iteration matrices $M_{u,v}^{h_1,h_2}$ and $M_{u,v}^{h_1,-h_2}$ are computed first. Given $X(u_0,v_0)$, the points on the first piece of surface curve are computed by Equation (\ref{Eqn:Dynamic evaluation of Dupin_Cyclide}) using matrix $M_{u,v}^{h_1,h_2}$. Starting from the end of first piece of curve we compute points on the second piece using matrix $M_{u,v}^{h_1,-h_2}$. We continue this process by using $M_{u,v}^{h_1,h_2}$ and $M_{u,v}^{h_1,-h_2}$ for evaluating odd number or even number pieces of curves alternately. Figure~\ref{Fig:Dupin Cyclide}(a) illustrates the result with 22 pieces of surface curves while Figure~\ref{Fig:Dupin Cyclide}(b) illustrates the surface curve with 100 pieces. Due to periodicity, the last point on the 100th piece is theoretically the same as the start point of the first one. Practically, the distance between these two points is $6.526\times 10^{-13}$ even after 10000 times of matrix-point multiplication in the presence of truncation errors of irrational numbers.

\textbf{Numerical stability of dynamic evaluation.}
Since all dynamically evaluated points on a curve or surface are computed by one or a few constant iteration matrices and a start point, the accuracy of the constant iteration matrices play a key role for the accuracy of the evaluated points.
The results computed by the Taylor method in Example 1 show that an inexact iteration matrix may cause deviations in the following evaluated points. All examples employing the basis transformation technique demonstrate that the new method can be used to compute the iteration matrices and the points on curves or surfaces accurately enough.

Even the iteration matrix is accurate, the noise at the initial point can propagate to the dynamically evaluated points.
Fortunately, the propagated errors are bounded and controlled when points on a curve segment or a surface patch are dynamically evaluated. Suppose $X(t)=M_X\Phi(t)$ is an exponential polynomial curve as defined in Section~\ref{Subsection:evaluation of exponential polynomial curves} and a sequence of points on the curve are computed by Equation (\ref{Eqn:dynamic evaluation for general curve X(t)}). If the start point has been changed as $\tilde{X}(t_0)=X(t_0)+X_\varepsilon$, the dynamically evaluated points become
\[
\tilde{X}(t_i)=M_h \tilde{X}(t_{i-1}) = M_h^i (X(t_0)+X_\varepsilon),
\]
where $M_h=M_XC_hM_X^{-1}$ and $C_h=e^{Ah}$ are as defined in Equation (\ref{Eqn:dynamic evaluation for general curve X(t)}) or Proposition~\ref{Proposition:space with tranlated basis}. The error magnitude for the $i$th point is estimated as
\[
||\tilde{X}(t_i)-X(t_i)||_\infty = ||M_h^i X_\varepsilon||_\infty \leq ||M_X e^{Aih} M_X^{-1}||_\infty ||X_\varepsilon||_\infty
\]
Since we compute points on a curve segment or a surface patch in practice, the parameter $ih$ lies on a limited interval. Therefore, the norm of the matrix $M_X e^{Aih} M_X^{-1}$ and the noise magnitudes of the evaluated points are bounded.
We have recomputed points for above examples using start points with added noise. It is found that the deviation magnitudes for the dynamically evaluated points are around the same or a few times larger than the magnitudes of added noise.


\section{Conclusions}
\label{Sec:Conclu}
This paper has presented a robust and efficient algorithm for dynamic evaluation of free-form curves and surfaces constructed by general exponential polynomials. By explicit computation of transformation matrices between exponential polynomial bases with or without translation of the parameter, points on curves or surfaces with equal parameter steps can be evaluated dynamically with only arithmetic operations. The proposed technique suffers no shortcomings of classical numerical algorithms for solving linear differential systems any more and it can be used for accurate and  stable evaluation of general exponential polynomial curves with any parameter steps. Besides evaluating points with fixed parameter steps or families of iso-parameter curves on surfaces, the basis transformation technique can also be used for evaluating polynomial curves with changing parameter steps or dynamic evaluation of skew-parameterized curves on surfaces in a simple and efficient way.

\section*{Acknowledgment}
We owe thanks to referees for their invaluable comments and suggestions which helped to improve the presentation of the paper greatly.

\bibliographystyle{siamplain}
\bibliography{DynamicBasisTransform}

\end{document}